\numberwithin{equation}{section}
\newtheorem{thm}{Theorem}[section]
\newtheorem{cor}[thm]{Corollary}
\newtheorem{lem}[thm]{Lemma}
\newtheorem{prop}[thm]{Proposition}
\theoremstyle{remark}
\newtheorem{example}[thm]{Example}}
\newcommand{\abs}[1]{\left\vert#1\right\vert}
\newcommand{\Reg}{\mathop{\mathrm{Reg}}}
{\theoremstyle{definition}
\newtheorem{algorithm}[thm]{Algorithm}}
\newcommand{\malcev}{\mathbin{\hbox{$\bigcirc$\rlap{\kern-8.25pt\raise0,50pt\hbox{${\tt
  m}$}}}}}
\newcommand{\smalcev}{\mathbin{\hbox{$\bigcirc$\rlap{\kern-7pt\raise0,30pt\hbox{${\tt
  m}$}}}}}
\begin{document}
\title{The pro-nilpotent group topology on a free group}
\author{J. Almeida, M.H. Shahzamanian and B. Steinberg}
\address{J. Almeida and M.H. Shahzamanian\\ Centro de Matem\'atica e Departamento de Matem\'atica, Faculdade de Ci\^{e}ncias,
Universidade do Porto, Rua do Campo Alegre, 687, 4169-007 Porto,
Portugal}
\email{jalmeida@fc.up.pt; m.h.shahzamanian@fc.up.pt}
\address{B. Steinberg\\ Department of Mathematics, City College of New York, New York City, NY 10031.}
\email{bsteinberg@ccny.cuny.edu\footnote{Corresponding author}}
\subjclass[2010]{20M07, 20M35, 20F10}
\keywords{Profinite topologies, rational languaages, automata, monoids, block groups, semidirect products, pseudovarieties,
Mal'cev products.
}

\begin{abstract}
In this paper, we study the pro-nilpotent group topology on a free group. First we describe the closure of the
product of finitely many finitely generated subgroups of a free group in the pro-nilpotent group topology and then present an algorithm to compute it. We deduce that the nil-closure of a rational subset of a free group is an effectively constructible rational subset and hence has decidable membership.  We also prove that the $\mathsf{G_{nil}}$-kernel of a finite monoid is computable and hence pseudovarieties of the form $\mathsf{V}
\smalcev \mathsf{G_{nil}}$ have decidable membership problem, for every decidable pseudovariety of monoids $\mathsf{V}$.  Finally, we prove that the semidirect product $\mathsf{J} \ast \mathsf{G_{nil}}$ has a decidable membership problem.
\end{abstract}
\maketitle
%\tableofcontents
%%%%%%%%%%%%%%%%%%%%%%%%%%%%%%%%%%%%%%%%%%%%%%%%%%%%%%%%%%%%%%%%%%%%%%%%%%%%%%%%%%%%%%%%%%%%%%%%%%%%%%%%%%%%%%%%%%%%%%%%%%%%%%%%%%%%%%%%%%%%%%%%%%%%%%%%%%%%%%%%%%%%%%%%%%%%%%%%%%%%%%%%%%%%%%%%%%%%%%%%%%%%%%%%%%%%%%%%%%%%%%%%%%%%%%%%%%%%%%%%%%%%%%%%%%%%%%%%%%%%%%%%%%%%%%%%%%%%%%%%%%%%%%%%
%%%%%%%%%%%%%%%%%%%%%%%%%%%%%%%%%%%%%%%%%%%%%%%%%%%%%%%%%%%%%%%%%%%%%%%%%%%%%%%%%%%%%%%%%%%%%%%%%%%%%%%%%%%%%%%%%%%%%%%%%%%%%%%%%%%%%%%%%%%%%%%%

\section{Introduction}\label{pre}

Hall showed that every finitely generated subgroup of a free group is closed in the profinite topology~\cite{Hal}. Pin and Reutenauer conjectured that if $H_1, H_2, \ldots , H_n$ are finitely generated subgroups of a free group, then the product $H_1H_2\cdots H_n$ is closed in the profinite topology~\cite{Pi-Ru}. Assuming this conjecture they presented a simple algorithm to compute the closure of a given rational subset of a free group. This conjecture was proved by Ribes and Zalesski\u\i~\cite{RiZa0}, who also later proved that if the subgroups $H_1, \ldots, H_n$ are $p$-closed for some prime $p$,
then $H_1 \cdots H_n$ is $p$-closed, too~\cite{RiZa}.

Margolis, Sapir and Weil provided an algorithm to compute the nil-closure of a finitely generated subgroup of a free group~\cite{Mar01}.
In this paper, we provide an example of two finitely generated nil-closed subgroups $H,K$ of a free group whose product $HK$ is not nil-closed.  However, we prove that the nil-closure of the product of finitely many finitely generated subgroups is the intersection over all primes $p$ of its $p$-closures and present a procedure to compute algorithmically a finite automaton that accepts precisely the reduced words in the nil-closure of the product.
Hence, there is a uniform algorithm to compute membership in the nil-closure of a product of finitely many finitely generated subgroups of a free group. We also prove that the nil-closure of a rational subset of a free group is again a rational subset and provide an algorithm to compute an automaton that accepts the reduced words in the nil-closure.
%Based on this result, we prove that the nil-closure of a rational subset of a free monoid is computable.
This yields that the Mal'cev product $\mathsf{V} \malcev \mathsf{G_{nil}}$ is decidable for every decidable pseudovariety of monoids $\mathsf{V}$, where $\mathsf{G_{nil}}$ is the pseudovariety of all finite nilpotent groups.

Auinger and the third author introduced the concept of arboreous pseudovarieties of groups~\cite{Auinger&Steinberg:2001a} and proved that a pseudovariety of groups $\mathsf{H}$ is arboreous if and only if $\mathsf{J}\malcev \mathsf{H} = \mathsf{J}\ast\mathsf{H}$, where $\mathsf{J}$ is the pseudovariety of all finite $\mathcal{J}$-trivial monoids. The pseudovariety $\mathsf{G_{nil}}$ is not arboreous and, therefore, $\mathsf{J}\malcev \mathsf{G_{nil}}\neq\mathsf{J}\ast \mathsf{G_{nil}}$. We prove that the pseudovariety $\mathsf{J}\ast \mathsf{G_{nil}}$ has decidable membership as an application of our results on computing nil-closures of rational subsets.

\section{Decidability of the nil-closure of rational subsets}
Let $G$ be a group and $\mathsf{H}$ a pseudovariety of groups, that is, a class of finite groups closed under finite direct products, subgroups and homomorphic images. Then the pro-$\mathsf{H}$ topology on $G$ is the group topology  defined by taking as a fundamental system of neighborhoods of the identity all normal subgroups $N$ of $G$ such that $G/N \in \mathsf{H}$.  This is the weakest topology on $G$ so that every homomorphism of $G$ to a group in $\mathsf{H}$ (endowed with the discrete topology)
is continuous. We say that $G$ is \emph{residually} $\mathsf{H}$ if, for every $g \in G\setminus \{1\}$, there is a homomorphism $\varphi\colon G \rightarrow H \in\mathsf{H}$ with $\varphi(g)\neq 1$, or, equivalently, $\{1\}$ is an $\mathsf H$-closed subgroup. In this case, the pro-$\mathsf{H}$ topology is Hausdorff and, in fact, it is metric when $G$ is finitely generated. More precisely, when $G$ is finitely generated, the topology is given by the following ultrametric \'ecart. For $g \in G$, define
$$r(g) = \min(\{[G : N]\mid G/N \in \mathsf{H}, g\not\in  N\} \cup \{\infty\}).$$
Then the $\mathsf{H}$-pseudonorm is given by
$$\abs{g}_{\mathsf{H}} = 2^{-r(g)} \mbox{ (where }2^{-\infty} = 0).$$
One can verify that
$$\abs{g_1g_2}_{\mathsf{H}} \leq \max\{\abs{g_1}_{\mathsf{H}}, \abs{g_2}_{\mathsf{H}}\}.$$
For $g_1, g_2 \in G$, we define
$$d_{\mathsf{H}}(g_1, g_2) = \abs{g_1g_2^{-1}}_{\mathsf{H}}.$$
It is easy to see that this is an ultrametric \'ecart defining the
pro-$\mathsf{H}$ topology, which is a metric if and only if $G$ is
residually $\mathsf{H}$. The pseudovariety $\mathsf{H}$ is said to be
\emph{extension-closed} if, whenever $1 \rightarrow N \rightarrow G
\rightarrow H \rightarrow 1$ is an exact sequence of groups with $N,H
\in \mathsf{H}$, we also have $G \in \mathsf{H}$.

We use $\mathrm{Cl}_{\mathsf{H}}(X)$ to denote the closure of $X\subseteq G$ in the pro-$\mathsf{H}$ topology. If $p$ is a prime, then $\mathsf{G_p}$ denotes the pseudovariety of all finite $p$-groups.  We denote by $\mathsf{G_{nil}}$ the pseudovariety of all finite nilpotent groups.
We talk of $p$-closure, $p$-denseness, etc., instead of $\mathsf{G_p}$-closure, $\mathsf{G_p}$-denseness, etc.
We also talk of nil-closure, nil-denseness, etc., instead of $\mathsf{G_{nil}}$-closure, $\mathsf{G_{nil}}$-denseness, etc.

An important property that we shall exploit is that if $\mathsf H\subseteq \mathsf K$ are pseudovarieties of groups, then $\mathrm{Cl}_{\mathsf H}(X)=\mathrm{Cl}_{\mathsf H}(\mathrm{Cl}_{\mathsf K}(X))$ for $X\subseteq G$.

\subsection{The nil-closure of a product of finitely generated subgroups}
In this subsection we describe the nil-closure of a finite product of subgroups of a free group in terms of its $p$-closures.  In the next subsection, we provide an algorithm to compute the nil-closure of a finite product of finitely generated subgroups, and more generally, of a rational subset.

Let $H$ be a finitely generated subgroup of a free group $F$. Margolis, Sapir and Weil proved that the nil-closure of $H$ is the intersection over all primes $p$ of the $p$-closures of $H$~\cite[Corollary 4.1]{Mar01}.  Our goal is to first prove an analogous result for products of subgroups of free groups.

The following straightforward lemma describes the $\mathsf{H}$-closure of an arbitrary subset of a group with respect to the pro-$\mathsf{H}$ topology for a pseudovariety $\mathsf{H}$ of groups.

\begin{lem} \label{CloX}
Let $F$ be a group with its pro-$\mathsf{H}$ topology, and let $X$ be a subset of $F$. Then
the $\mathsf{H}$-closure of $X$ is the intersection over all $H\in\mathsf{H}$ and (onto) homomorphisms $\varphi:F\rightarrow H$ of the subsets $\varphi^{-1}\varphi(X)$.
\end{lem}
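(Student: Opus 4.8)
The plan is to prove the asserted equality by establishing the two inclusions separately, in each case using the description of the pro-$\mathsf{H}$ topology for which a fundamental system of neighbourhoods of a point $g\in F$ is given by the cosets $gN$ with $N\trianglelefteq F$ and $F/N\in\mathsf{H}$.

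For the inclusion $\mathrm{Cl}_{\mathsf{H}}(X)\subseteq\bigcap_{\varphi}\varphi^{-1}\varphi(X)$, I would fix a homomorphism $\varphi\colon F\to H$ with $H\in\mathsf{H}$ and note that $\varphi$ is continuous when $H$ carries the discrete topology, directly from the definition of the pro-$\mathsf{H}$ topology (equivalently, $\ker\varphi$ is an open normal subgroup). Since every subset of the discrete group $H$ is closed, $\varphi^{-1}\varphi(X)$ is a closed subset of $F$ containing $X$, so it contains $\mathrm{Cl}_{\mathsf{H}}(X)$; intersecting over all such $\varphi$ yields the first inclusion. Passing between arbitrary and onto homomorphisms is harmless, since replacing $H$ by the subgroup $\varphi(F)\in\mathsf{H}$ (using that $\mathsf{H}$ is closed under subgroups) leaves $\varphi^{-1}\varphi(X)$ unchanged.

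For the reverse inclusion I would argue by contraposition: if $g\notin\mathrm{Cl}_{\mathsf{H}}(X)$, then some basic neighbourhood $gN$ of $g$, with $F/N\in\mathsf{H}$, is disjoint from $X$. Taking $\varphi$ to be the canonical projection $F\to F/N$, the condition $gN\cap X=\emptyset$ says precisely that $\varphi(g)\notin\varphi(X)$, that is, $g\notin\varphi^{-1}\varphi(X)$, so $g$ is not in the intersection. Combining the two inclusions gives the lemma. No serious obstacle is expected; the only points requiring a little care are matching the coset description of neighbourhoods in the pro-$\mathsf{H}$ topology with kernels of maps into $\mathsf{H}$, and the (trivial) reduction to onto homomorphisms.
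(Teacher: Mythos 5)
Your proof is correct; the paper states this lemma without proof (calling it straightforward), and your two-inclusion argument---continuity of maps to discrete groups in $\mathsf{H}$ for one direction, and separating a point outside the closure from $X$ by a coset $gN$ with $F/N\in\mathsf{H}$ for the other---is exactly the standard argument the authors leave implicit. The only details worth making explicit are the ones you already flag: $F/\ker\varphi\cong\varphi(F)\in\mathsf{H}$ by closure under subgroups, so kernels of such homomorphisms are indeed open.
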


Let $G$ be a group in the pro-$\mathsf{H}$ topology, $X \subseteq G$, and $g \in G$.
Note that $\mathrm{Cl}_{\mathsf{H}}(Xg) = \mathrm{Cl}_{\mathsf{H}}(X)g$ and $\mathrm{Cl}_{\mathsf{H}}(gX) = g \mathrm{Cl}_{\mathsf{H}}(X)$ as left and right translations by elements of $G$ are homeomorphisms.

\begin{prop} \label{04}
Let $F$ be a group and $H_1, \ldots, H_n$ be subgroups of $F$. Then the nil-closure of $H_1\cdots H_n$ is the intersection over all primes $p$ of the $p$-closures of $H_1\cdots H_n$.
\end{prop}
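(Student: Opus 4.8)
The plan is to establish the two inclusions separately. The inclusion $\mathrm{Cl}_{\mathsf{G_{nil}}}(H_1\cdots H_n)\subseteq\bigcap_p\mathrm{Cl}_{\mathsf{G_p}}(H_1\cdots H_n)$ is immediate from the general fact recorded just before the statement: since $\mathsf{G_p}\subseteq\mathsf{G_{nil}}$ for every prime $p$, applying that fact with $\mathsf H=\mathsf{G_p}$ and $\mathsf K=\mathsf{G_{nil}}$ gives $\mathrm{Cl}_{\mathsf{G_p}}(X)=\mathrm{Cl}_{\mathsf{G_p}}(\mathrm{Cl}_{\mathsf{G_{nil}}}(X))\supseteq\mathrm{Cl}_{\mathsf{G_{nil}}}(X)$ for any subset $X$, in particular for $X=H_1\cdots H_n$; intersecting over all $p$ yields the inclusion. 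This half uses nothing about $F$ or about the $H_i$ being subgroups.

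For the reverse inclusion, set $X=H_1\cdots H_n$ and take $g\in\bigcap_p\mathrm{Cl}_{\mathsf{G_p}}(X)$. By Lemma~\ref{CloX} applied to $\mathsf{G_{nil}}$, it suffices to show $\varphi(g)\in\varphi(X)$ for every homomorphism $\varphi\colon F\to N$ with $N$ finite nilpotent, and since $\varphi$ factors through the (still nilpotent) subgroup $\varphi(F)$ we may assume $\varphi$ is onto. Write $N=\prod_{p}N_p$ as the internal direct product of its Sylow subgroups over the finitely many primes dividing $\abs{N}$, with projections $\pi_p\colon N\to N_p$. For each such $p$ the composite $\pi_p\varphi\colon F\to N_p$ maps onto a finite $p$-group, so $g\in\mathrm{Cl}_{\mathsf{G_p}}(X)$ combined with Lemma~\ref{CloX} (now applied to $\mathsf{G_p}$) gives $\pi_p(\varphi(g))\in\pi_p(\varphi(X))$.

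The crux, which I would isolate as a short lemma and which is exactly where the hypothesis that the $H_i$ are subgroups enters, is the following: in a finite nilpotent group $N=\prod_p N_p$ every subgroup $M$ satisfies $M=\prod_p(M\cap N_p)$ with $M\cap N_p=\pi_p(M)$ (a direct consequence of the Chinese Remainder Theorem, realizing each Sylow component of an element $m\in M$ as a suitable power of $m$); hence, using that distinct Sylow subgroups of $N$ centralize one another, any product $M_1\cdots M_n$ of subgroups of $N$ satisfies $M_1\cdots M_n=\prod_p\bigl(\pi_p(M_1)\cdots\pi_p(M_n)\bigr)=\prod_p\pi_p(M_1\cdots M_n)$. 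Taking $M_i=\varphi(H_i)$ gives $\varphi(X)=\prod_p\pi_p(\varphi(X))$, so an element $n\in N$ lies in $\varphi(X)$ if and only if $\pi_p(n)\in\pi_p(\varphi(X))$ for every $p$. Applying this to $n=\varphi(g)$ together with the conclusion of the previous paragraph gives $\varphi(g)\in\varphi(X)$, which completes the argument. I expect essentially all of the work to be in verifying this structural lemma about products of subgroups of a finite nilpotent group; the rest is a routine packaging of Lemma~\ref{CloX}. Note, incidentally, that the proof makes no use of freeness, so the proposition holds verbatim for an arbitrary group $F$.
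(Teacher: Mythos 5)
Your proposal is correct and follows essentially the same route as the paper: the forward inclusion is the trivial monotonicity of closures, and the reverse inclusion rests on decomposing the finite nilpotent quotient into its Sylow subgroups, observing that the image of each $H_i$ (and hence of the product) splits compatibly across the Sylow components, and then using the projections $\pi_p$ together with the $p$-closure hypothesis. The structural fact you isolate as a lemma is exactly the paper's displayed equation~\eqref{eq:directproduct}, so there is no substantive difference.
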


\begin{proof}
Suppose that $x \in \mathrm{Cl}_{nil}(H_1\cdots H_n)$. Then, for every homomorphism $\varphi \colon F \rightarrow G$ onto a finite nilpotent group $G$, one has $\varphi(x) \in \varphi(H_1 \cdots H_n)$. In particular, for every homomorphism $\varphi \colon F \rightarrow P$ onto a finite $p$-group $P$, we have that $\varphi(x) \in \varphi(H_1 \cdots H_n)$. Thus, $x \in \mathrm{Cl}_{p}(H_1\cdots H_n)$ for every prime $p$ and hence $x$ is in the intersection over all primes $p$ of the $p$-closures of $H_1\cdots H_n$.

Now suppose that an element $x$ is in the intersection of the $p$-closures of $H_1\cdots H_n$ over all primes $p$ and $\varphi \colon F \rightarrow G$ is a homomorphism onto a finite nilpotent group $G$.

The group $G$, being a finite nilpotent group, is the direct product of its Sylow subgroups (each of which is normal). Suppose that the Sylow subgroups of $G$ are $P_1, \ldots, P_m$ where
$P_j$ is a $p_j$-Sylow subgroup for a prime $p_j$ and $1 \leq j \leq m$;  then  $G=P_1\cdots P_m$.
The group $\varphi(H_i)$ is a subgroup of $G$, so it is nilpotent and, thus, it is also a direct product of its Sylow subgroups  for every $1 \leq i \leq n$. Therefore, $\varphi(H_i)=P_{i,1}\cdots P_{i,m}$ such that $P_{i,j}=1$ or $P_{i,j}$ is a $p_i$-Sylow subgroup of $\varphi(H_i)$, for every $1 \leq i \leq n$ and $1 \leq j \leq m$. As $G$ is nilpotent, it has a unique $p$-Sylow subgroup, for each prime divisor $p$ of its order, and so $P_{i,j} \subseteq P_j$, for $i=1,\ldots, n$ and $j=1,\ldots, m$.  As the Sylow subgroups of $G$ are normal and have pairwise trivial intersection, we have that $P_i$ and $P_j$ commute elementwise (for $i\neq j$).   Thus it follows that
\begin{equation}\label{eq:directproduct}
\begin{split}
\varphi(H_1 \cdots H_n) &=
 P_{1,1} \cdots P_{1,m}
  P_{2,1} \cdots P_{2,m}\cdots
   P_{n,1}\cdots P_{n,m}\\ &=P_{1,1} \cdots P_{n,1} P_{1,2} \cdots P_{n,2}\cdots P_{1,m}\cdots P_{n,m}.
   \end{split}
\end{equation}

Since $\varphi(x)\in G$, there exist unique $a_1 \in P_1,a_2 \in P_2, \ldots, a_m \in P_m$ such that $\varphi(x)=a_1a_2\cdots a_m$.
For every prime $p_j$, $1 \leq j \leq m$, consider the canonical projection $\pi_j\colon G \rightarrow P_j$. Since $\pi_j \varphi$ is a homomorphism and $x \in \mathrm{Cl}_{p_j}(H_1\cdots H_n)$, we conclude that $a_j=\pi_j\varphi(x) \in P_{1,j} \cdots P_{n,j}$ for $j=1,\ldots, m$. Hence, $\varphi(x) \in \varphi(H_1\cdots H_n)$  by~\eqref{eq:directproduct} and, therefore, the intersection over all primes $p$ of the $p$-closures of $H_1\cdots H_n$ is in the nil-closure of $H_1\cdots H_n$.
The result follows.
\end{proof}

This yields immediately the following corollary.

\begin{cor} \label{main-theorem}
Let $F$ be a group and $H_1, \ldots, H_n$ be subgroups of $F$.
\begin{enumerate}
\item If the subset $H_1 \cdots H_n$ is $p$-dense in $F$, for every prime $p$, then the subset $H_1 \cdots H_n$ is nil-dense in $F$.
\item If, for every prime $p$, there exists an integer $1 \leq i \leq n$ such that $H_i$ is $p$-dense, then the subset $H_1 \cdots H_n$ is nil-dense in $F$.
\end{enumerate}
\end{cor}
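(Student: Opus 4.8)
The plan is to deduce both statements directly from Proposition~\ref{04}, which asserts that the nil-closure of $H_1\cdots H_n$ equals the intersection $\bigcap_p\mathrm{Cl}_p(H_1\cdots H_n)$ taken over all primes $p$.

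For part~(1), I would observe that $H_1\cdots H_n$ being $p$-dense in $F$ means precisely that $\mathrm{Cl}_p(H_1\cdots H_n)=F$. If this holds for every prime $p$, then Proposition~\ref{04} gives
$$\mathrm{Cl}_{nil}(H_1\cdots H_n)=\bigcap_p\mathrm{Cl}_p(H_1\cdots H_n)=\bigcap_p F=F,$$
so $H_1\cdots H_n$ is nil-dense in $F$, as required.

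For part~(2), the key elementary observation is that each $H_j$, being a subgroup of $F$, contains the identity; hence for any fixed index $i$ every $h\in H_i$ can be written as $h=1\cdots 1\cdot h\cdot 1\cdots 1\in H_1\cdots H_n$, so that $H_i\subseteq H_1\cdots H_n$. By monotonicity of the closure operator this yields $\mathrm{Cl}_p(H_i)\subseteq\mathrm{Cl}_p(H_1\cdots H_n)$ for every prime $p$. Consequently, if for each prime $p$ there is an index $i$ with $H_i$ $p$-dense, then $\mathrm{Cl}_p(H_1\cdots H_n)=F$ for every $p$, and applying part~(1) finishes the argument.

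I do not anticipate any genuine obstacle here: the substance is entirely carried by Proposition~\ref{04}, and the only additional ingredients are the trivial inclusion $H_i\subseteq H_1\cdots H_n$ coming from the fact that the $H_j$ are subgroups, together with the monotonicity of topological closure. This is exactly why the statement can be recorded as an immediate corollary.
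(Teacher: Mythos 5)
Your argument is correct and matches the paper's intent exactly: the paper states the corollary as an immediate consequence of Proposition~\ref{04}, and your derivation of part~(1) from the intersection formula, together with the observation $H_i\subseteq H_1\cdots H_n$ for part~(2), is precisely the routine verification being left to the reader.
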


\begin{example}\label{ex:products.not.closed}
Consider the subgroups $H = \langle a^2, b \rangle$  and $K =\langle  a,b^3\rangle$ of the free group $F$
on the set $\{a, b\}$. By~\cite[Corollary 3.3]{Mar01}, the subgroup $H$ is $p$-dense for every prime except prime $2$ and the subgroup $K$ is $p$-dense for every prime except prime $3$.  Therefore, $HK$ is nil-dense by Corollary~\ref{main-theorem}.  Since $HK$ is a proper subset of $F$ (in fact, every pair of infinite index, finitely generated subgroups of a free group has infinitely many double cosets, cf.~\cite{isproper}), it follows that $HK$ is not nil-closed.

On the other hand, it is easily checked using the algorithm in~\cite[Section 3.2]{Mar01}, that the subgroup $H$ is $2$-closed and the subgroup $K$ is $3$-closed. Alternatively, $H$ is a free factor of the kernel of the mapping $F\to \mathbb Z/2\mathbb Z$ mapping $a$ to $1+2\mathbb Z$ and $b$ to $0+2\mathbb Z$, $K$ is a free factor of the kernel of the mapping $F\to \mathbb Z/3\mathbb Z$ mapping $a$ to $0+3\mathbb Z$ and $b$ to $1+3\mathbb Z$ and free factors of open subgroups are closed in the pro-$p$ topology, for any prime $p$, by the results of Ribes and Zalessk{\u\i}~\cite{RiZa,Mar01}.

It follows that $H$ and $K$ are both nil-closed. So a product of nil-closed subgroups of a free group is not necessarily nil-closed and hence it is not in general true that $\mathrm{Cl}_{nil}(H_1\cdots H_n)=\mathrm{Cl}_{nil}(H_1)\mathrm{Cl}_{nil}(H_2)\cdots \mathrm{Cl}_{nil}(H_n)$ for $n>1$.
\end{example}

\subsection{Computing the nil-closure of a rational subset}
If $M$ is a monoid (with the relevant examples for us being free groups and free monoids), then a subset of $M$ is said to be \emph{rational} if it belongs to the smallest collection $\mathcal C$ of subsets of $M$ such that:
\begin{itemize}
\item finite subsets of $M$ belong to $\mathcal C$;
\item if $X,Y\in \mathcal C$, then $X\cup Y$ belongs to $\mathcal C$;
\item if $X,Y\in \mathcal C$, then $XY=\{xy\mid x\in X,y\in Y\}$ belongs to $\mathcal C$;
\item if $X\in \mathcal C$, then the submonoid of $M$ generated by $X$ belongs to $\mathcal C$.
\end{itemize}
If $M$ is finitely generated by a set $A$ and $\pi\colon A^*\to M$ is the canonical projection (where $A^*$ denotes the free monoid on $A$), then a subset $X$ of $M$ is rational if and only if there is a regular language $L$ over $A$ such that $X=\pi(L)$.  Recall that a \emph{regular language} over an alphabet $A$ is a subset accepted by a finite $A$-automaton $\mathcal A$.  Here, we take a finite $A$-automaton to be a finite directed graph with edges labeled by elements of $A$ together with two distinguished subsets of vertices $I$ and $T$.  The language $L(\mathcal A)$ accepted by $\mathcal A$ consists of all words in $A^*$ labeling a path from a vertex in $I$ to a vertex in $T$. Regular languages over $A$ are exactly the rational subsets of $A^*$ by Kleene's theorem.  See Eilenberg's book~\cite{EilenbergA} for details.

If $G$ is a group, then a theorem of Anissimov and Seifert~\cite{Anisimov} says that a subgroup $H$ of $G$ is rational if and only if it is finitely generated.  Any finite product $H_1\cdots H_n$ of finitely generated subgroups $H_1,\ldots, H_n$ of $G$ is rational, as is any translate $gH_1\cdots H_n$.  Consequently, any finite union of translates of products of finitely many finitely generated subgroups of $G$ is rational.

Let $A$ be a finite set and $F(A)$ the free group on $A$. Let $\widetilde A=A\cup A^{-1}$ where $A^{-1}$ is a set of formal inverses of the elements of $A$ and denote by $\pi\colon \widetilde A^*\to F(A)$ the natural projection.  Then Benois proved~\cite{Beno} that a subset $L$ of $F(A)$ is rational if and only if the set of reduced words in $\widetilde A^*$ representing elements of $L$ (under $\pi$) is a regular language and consequently the rational subsets of $F(A)$ are closed under intersection and complement.  Moreover, given any finite automaton $\mathcal A$ over $\widetilde A$, there is a low-degree polynomial time algorithm to construct an automaton $\mathcal A'$ over $\widetilde A$ accepting precisely the reduced words representing elements of $\pi(L(\mathcal A))$.  It follows, that if $H_1,\ldots, H_n$ are finitely generated subgroups of $F(A)$, given by finite generating sets, and $g\in F(A)$, then one can effectively construct (in polynomial time in the sum of the lengths of the generators of the $H_i$ and the length of $g$) a finite automaton over $\widetilde A$ accepting precisely the reduced words representing an element of $gH_1\cdots H_n$, and similarly for finite unions of such subsets. See~\cite{Ste2} for details.

We will show that the nil-closure of a rational subset $L$ of $F(A)$ is a rational subset.  Moreover, there is an algorithm which, given a finite automaton $\mathcal A$ over $\widetilde A$ with $\pi(L(\mathcal A))=L$, produces a finite automaton $\mathcal A'$ over $\widetilde A$ accepting precisely the reduced words representing elements of the nil-closure of $L$.
%(Actually, we shall only give complete details for rational subsets of $A^*$ because the paper~\cite{Ste2}, unfortunately, did not state the necessary results for arbitrary rational subsets of a free group.)
The first step to do this is to construct from a set $H_1,\ldots, H_n$ of finitely generated subgroups of $F(A)$ (given by finite generating sets) a finite automaton accepting the reduced words in the nil-closure of $H_1\cdots H_n$.

In the seminal paper~\cite{Sta}, Stallings associated to each finitely generated subgroup $H$ of $F(A)$ an inverse automaton $\mathcal A(H)$ which can be used to solve a number of algorithmic problems concerning $H$ including the membership problem.  Stallings, in fact, used a different language than that of inverse automata; the automata theoretic formulation is from~\cite{Mar01}.

 An \emph{inverse automaton} $\mathcal A$ over $A$ is an $\widetilde A$-automaton with the property that there is at most one edge labeled by any letter leaving any vertex and if there is an edge $p\to q$ labeled by $a$, then there is an edge labeled by $a^{-1}$ from $q\to p$.  Moreover, we require that there is unique initial vertex, which is also the unique terminal vertex.  The set of all reduced words accepted by a finite inverse automaton is a finitely generated subgroup of $F(A)$ sometimes called the \emph{fundamental group} of the automaton.

If $H$ is a finitely generated subgroup of $F(A)$, then there is a unique finite connected inverse automaton $\mathcal A(H)$ whose fundamental group is $H$ with the property that all vertices have out-degree at least $2$ except possibly the initial vertex (where we recall that there are both $A$ and $A^{-1}$-edges). One description of $\mathcal  A(H)$ is as follows.  Take the inverse automaton $\mathcal A'(H)$ with vertex set the coset space $F(A)/H$ and with edges of the form $Hg\xrightarrow{\,\,a\,\,} Hga$ for $a\in \widetilde A$; the initial and terminal vertices are both $H$.  Then $\mathcal A(H)$ is the subautomaton whose vertices are cosets $Hu$ with $u$ a reduced word that is a prefix of the reduced form of some element $w$ of $H$ and with all edges between such vertices; the coset $H$ is still both initial and final.  Stallings presented an efficient algorithm to compute $\mathcal A(H)$ from any finite generating set of $H$ via a procedure known as folding. Conversely, one can efficiently compute a finite free basis for $H$ from $\mathcal A(H)$. From the construction, it is apparent that there is an automaton morphism $\mathcal A(H)\to \mathcal A(K)$ if and only if $H\subseteq K$ for finitely generated subgroups $H,K$.  Also, it is known that $H$ has finite index if and only if $\mathcal A(H)=\mathcal A'(H)$.  Stallings also provided an algorithm to compute $\mathcal A(H\cap K)$ from $\mathcal A(H)$ and $\mathcal A(K)$ (note that intersections of finitely generated subgroups of free groups are finitely generated by Howson's theorem).  See~\cite{Sta,Mar01,Ste2} for details.

%Let $Y=\{h_1,\ldots,h_m\}$ be a finite set of elements of $F(A)$ that generates $H$ and $h_i=h_{i,1}\cdots h_{i,m_i}$ with $h_{i,j}\in A\cup A^{-1}$ for $1\leq i\leq m$ and $1\leq j\leq m_i$.

%\begin{enumerate}
%\item[•] First, construct a set of $m$ circuits $c_i$ with edges $h_{i,1},\ldots, h_{i,m_i}$ around a distinguished vertex $1$, with the following convention: an inverse letter $a^{-1} (a \in A)$  gives rise to an $a$-labeled edge in the reverse direction on  the corresponding circuit.

%\item[•] Then, iteratively identify identically-labeled pairs of edges starting or ending at the same vertex.

%\item[•] Finally, iteratively remove vertices of degree one except 1.
%\end{enumerate}

%The graph $\mathcal{A}(H)$ is a reduced inverse automaton over $A$ in the following sense:
%the vertices of $\mathcal{A}(H)$ are considered as states, the distinguished vertex 1 as the initial-final state, and if $a \in A$ labels an edge from vertex $p$ to $q$, then we let $p. a = q$ and $q. a^{-1} = p$. If $u$ is a reduced word in $A\cup A^{-1}$, then $1.u = 1$ if and only if $u \in H$.

We next recall the notion of an overgroup of a finitely generated subgroup
of a free group from~\cite{Mar01}. Let $H$ be a finitely generated
subgroup of the free group $F(A)$ with Stallings automaton $\mathcal{A}(H)$.
Then, as a finite inverse automaton has only finitely many quotient automata, there are only finitely many subgroups $K$ of $F(A)$
containing $H$ such that the natural morphism from $\mathcal{A}(H)$ to
$\mathcal{A}(K)$ is onto. Such subgroups are called \emph{overgroups}
of $H$. Each overgroup is finitely generated and one can effectively
compute the set of Stallings automata of the  overgroups of $H$ from $\mathcal A(H)$. An important result, proved implicitly in~\cite{RiZa} and explicitly in~\cite{Mar01}, is that the $p$-closure of a
finitely generated subgroup $H$ is an overgroup of $H$ for any prime $p$~\cite[Corollary 2.4]{Mar01}.
%Now, suppose that $K$ is a finitely
%generated subgroup of the free group such that $H \subseteq K$. The
%set $Q(H,K)$ of all prime numbers $p$ such that $H$ is $p$-dense in
%$K$ is empty or cofinite; and it is effectively computable~\cite[Proposition 4.2]{Mar01}.

Let $K$ be a finitely generated subgroup of $F(A)$ and
let $\mathbb{P}(K)$ denote the set of prime
numbers $p$ such that $K$ is $p$-closed. Then the set $\mathbb{P}(K)$ is
either finite or cofinite, and it is effectively computable from $\mathcal A(H)$ by~\cite[Proposition 4.3]{Mar01}.  That is, one can decide if $\mathbb{P}(K)$ is finite or co-finite and  one can effectively list $\mathbb{P}(K)$ if it is finite and, otherwise, effectively list the complement of $\mathbb{P}(K)$, which in this case is finite.
%In fact, the proof of that result shows
%that $\mathbb{P}(H)$ is an effectively computable finite Boolean
%combination of finite and cofinite sets, namely the sets
%$\mathbb{P}(K)$ and $Q(H,K)$ where $K\supsetneq H$ is a strict overgroup of $H$ and so one can proceed by induction on the number of states of $\mathcal{A}(H)$.

Margolis, Sapir and Weil presented a procedure to compute the Stallings automaton of the nil-closure of a finitely generated subgroup of a free group (which is again finitely generated) as follows.

\begin{algorithm}[Margolis, Sapir, Weil]\label{a:msw}
Let $H$ be a finitely generated subgroup of $F(A)$ given by a finite generating set.
\begin{enumerate}
\item Compute $\mathcal A(H)$ using the Stallings folding algorithm.
\item Compute the set $\mathbb{A}$ of overgroups of $H$ (or more precisely, their Stallings automata).
\item Compute the subset $\mathbb{A}'=\{S\in \mathbb{A}\mid \mathbb{P}(S)\neq\emptyset\}$.
\item Compute the Stallings automaton of the intersection $K$ of the elements of $\mathbb{A}'$.
\item Return $\mathcal A(K)$ as the Stallings automaton of the nil-closure $K$ of $H$.
\end{enumerate}
\end{algorithm}

Now, let $H_1, \ldots, H_n$ be finitely generated subgroups of the free group $F(A)$.
By Proposition~\ref{04}, $\mathrm{Cl}_{nil}(H_1 \cdots H_n)$ is the
intersection of all the $p$-closures of $H_1 \cdots H_n$. Ribes and
Zalesski\u\i~ in~\cite{RiZa} proved that if the subgroups $H_1,
\ldots, H_n$ are $p$-closed for some prime $p$, then $H_1 \cdots H_n$
is $p$-closed too. Hence, the $p$-closure of $H_1 \cdots H_n$ is equal
to $\mathrm{Cl}_p(H_1)\mathrm{Cl}_p(H_2)\cdots \mathrm{Cl}_p(H_n)$.
By~\cite[Corollary 2.4]{Mar01} the $p$-closure of a finitely generated subgroup is one of its overgroups, which we shall exploit henceforth.

The procedure to compute an automaton accepting the reduced words in the nil-closure of $H_1 \cdots H_n$ is quite
similar to the procedure to compute the Stallings automaton of the nil-closure of a finitely generated subgroup in~\cite{Mar01}.

\begin{algorithm}\label{a:ouralg}
Let $H_1,\ldots, H_n$ be finitely generated subgroups of the free group $F(A)$ given by finite generating sets.
\begin{enumerate}
\item Compute the Stallings automata $\mathcal A(H_i)$ for $i=1,\ldots, n$.
\item Compute the set $\mathbb{A}_i$ of overgroups of $H_i$, for $1\leq i\leq n$, (or more precisely their Stallings automata).
\item Compute, for each set in the collection, \[\mathbb C=\{S_1 \cdots S_n \mid S_i\in \mathbb A_i\ \text{and}\ \mathbb P(S_1)\cap\cdots\cap \mathbb P(S_n)\neq \emptyset\}\] an $\widetilde A$-automaton accepting the set of reduced words belonging to it.
\item Compute an $\widetilde A$-automaton $\mathcal B$ accepting the reduced words in the intersection of the sets in the collection $\mathbb C$.
\item Return $\mathcal B$ as an $\widetilde A$-automaton accepting the reduced words in the nil-closure of $H_1\cdots H_n$.
\end{enumerate}
\end{algorithm}

We shall prove in a moment that Algorithm~\ref{a:ouralg} is correct.  First we verify that each of the steps of the algorithm can effectively be carried out.  The only step that is not straightforward to carry out based on the known algorithmic properties of rational subsets of free groups is deciding in (3) whether or not the set $\mathbb
P(S_1)\cap\cdots\cap \mathbb P(S_n)$ is empty  for $S_1\in \mathbb A_1, \ldots, S_n\in \mathbb A_n$.  Indeed, if $J\subseteq \{1,\ldots, n\}$ is the set of indices
such that $\mathbb P(S_i)$ is finite, then we can effectively compute $A=\bigcap_{i\in J}\mathbb P(S_i)$ and $B=\bigcup_{i\notin J}\mathbb P(S_i)'$ where $\mathbb P(S_i)'$ denotes the finite complement of $\mathbb P(S_i)$ for $i\notin J$.  Then $\mathbb P(S_1)\cap\cdots\cap \mathbb P(S_n)=\emptyset$ if and only if $A\subseteq B$, which is decidable as $A$ and $B$ are effectively computable finite sets.

%Let $A^{\ast}$ be the free monoid on $A$.
%Recall that a language $L \subseteq A^{\ast}$ is said to be \emph{rational} if it may be expressed in terms of the empty language and the languages of the %form $\{a\}$ with $a \in A$ by applying a finite number of times the binary operations of taking the union $L \cup K$ of two languages $L$ and $K$ or their concatenation $LK = \{uv \mid u \in L, v \in K\}$, or the unary operation of taking the submonoid $L^{\ast}$ generated by $L$.
%A language $L \subseteq A^{\ast}$ is recognized by a homomorphism $\varphi\colon A^{\ast}\rightarrow S$ into a semigroup $S$ if there exists a subset $P \subseteq S$ such that $L = \varphi^{-1}(P)$. Kleene~\cite{Kl} proved that a language $L$ over a finite alphabet is rational if and only if it is recognized by some finite automaton  and Myhill~\cite{My} proved that a language $L$ is recognized by a finite automaton if and only if it is recognized by a finite semigroup.
%
%Benois proved that membership in rational subsets of finitely generated free groups, such as products of finitely generated subgroups, is decidable~\cite{Beno} or~\cite{Ste2} for details. Hence the subsets $S_1\cdots S_n$ appearing in the third step of the algorithm are computable.  Actually, the intersection of rational subsets of a free group is again an effectively computable rational subset~\cite{Beno}, so one could compute an automaton testing membership in the intersection of $\mathbb C$.

Proposition~\ref{04} will be used to check that Algorithm~\ref{a:ouralg} is correct.

\begin{thm} \label{nil-dec}
The nil-closure of the product of finitely many finitely generated subgroups of a free group is an effectively computable rational subset and hence has a decidable membership problem.
\end{thm}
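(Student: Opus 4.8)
The plan is to prove Theorem~\ref{nil-dec} by showing that Algorithm~\ref{a:ouralg} is correct, i.e.\ that the $\widetilde A$-automaton $\mathcal B$ it returns accepts precisely the reduced words representing elements of $\mathrm{Cl}_{nil}(H_1\cdots H_n)$, and that every step is effective. The starting point is Proposition~\ref{04}, which gives $\mathrm{Cl}_{nil}(H_1\cdots H_n)=\bigcap_p \mathrm{Cl}_p(H_1\cdots H_n)$, the intersection being taken over all primes $p$. Combining this with the theorem of Ribes and Zalesski\u\i\ from~\cite{RiZa} (so that $\mathrm{Cl}_p(H_1\cdots H_n)=\mathrm{Cl}_p(H_1)\cdots\mathrm{Cl}_p(H_n)$) and with~\cite[Corollary~2.4]{Mar01} (so that each $\mathrm{Cl}_p(H_i)$ is one of the finitely many overgroups in $\mathbb A_i$) reduces the theorem to a combinatorial identification of this infinite intersection with a finite one.

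The key claim to isolate and prove is
\begin{equation*}
\bigcap_p \mathrm{Cl}_p(H_1)\cdots\mathrm{Cl}_p(H_n)=\bigcap\{\,S_1\cdots S_n\mid S_i\in\mathbb A_i,\ \mathbb P(S_1)\cap\cdots\cap\mathbb P(S_n)\neq\emptyset\,\},
\end{equation*}
the right-hand side being exactly the intersection of the finite collection $\mathbb C$ computed in step~(3). For the inclusion $\subseteq$, I would take any tuple $(S_1,\dots,S_n)$ occurring on the right, pick a prime $p\in\mathbb P(S_1)\cap\cdots\cap\mathbb P(S_n)$, and note that each $S_i$ is $p$-closed and contains $H_i$, whence $\mathrm{Cl}_p(H_i)\subseteq S_i$ and therefore $\mathrm{Cl}_p(H_1)\cdots\mathrm{Cl}_p(H_n)\subseteq S_1\cdots S_n$; since the left-hand intersection is contained in its $p$-th term, it lies in $S_1\cdots S_n$. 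For the reverse inclusion $\supseteq$, I would fix a prime $p$ and observe that the particular tuple $(\mathrm{Cl}_p(H_1),\dots,\mathrm{Cl}_p(H_n))$ consists of overgroups by~\cite[Corollary~2.4]{Mar01} and satisfies $p\in\mathbb P(\mathrm{Cl}_p(H_i))$ for every $i$ (a closure is closed in its own topology), so this tuple is one of those appearing on the right; consequently the right-hand intersection is contained in $\mathrm{Cl}_p(H_1)\cdots\mathrm{Cl}_p(H_n)$, and since $p$ was arbitrary, it is contained in the left-hand intersection. (This argument also shows $\mathbb C\neq\emptyset$, so the right-hand side is a genuine finite intersection.)

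With the key claim in hand, the remaining work is to check effectiveness, most of which is already recorded above. The sets $\mathbb A_i$ of overgroups are finite and computable; the collection $\mathbb C$ is finite, and by the discussion preceding the theorem one can decide for each candidate tuple whether $\mathbb P(S_1)\cap\cdots\cap\mathbb P(S_n)=\emptyset$, so $\mathbb C$ is effectively listable. Each member $S_1\cdots S_n$ of $\mathbb C$ is a product of finitely many finitely generated subgroups of $F(A)$, hence a rational subset, and from the Stallings automata of the $S_i$ one can (via Benois~\cite{Beno} and the preceding remarks) construct an $\widetilde A$-automaton accepting exactly its reduced words; since rational subsets of $F(A)$ are effectively closed under intersection (again~\cite{Beno}), one obtains the automaton $\mathcal B$ of step~(4) accepting the reduced words of $\bigcap\mathbb C=\mathrm{Cl}_{nil}(H_1\cdots H_n)$. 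In particular this set is an effectively constructible rational subset, and membership is decidable: given $g\in F(A)$, compute its reduced form and test acceptance by $\mathcal B$. Note that, unlike the single-subgroup case, no claim is being made that this rational set is a subgroup.

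The step I expect to be the main obstacle is the $\supseteq$ inclusion of the key claim — more precisely, the observation that the $p$-closures $\mathrm{Cl}_p(H_i)$ are themselves captured among the finitely many overgroups of~\cite{Mar01}, while the invariant $\mathbb P$ simultaneously records which primes $p$ make a given tuple of overgroups actually equal $(\mathrm{Cl}_p(H_1),\dots,\mathrm{Cl}_p(H_n))$, so that the genuine tuples are exactly those surviving the emptiness test. Everything else is bookkeeping with the already-established closure and computability properties of rational subsets of free groups together with the cited results of Ribes--Zalesski\u\i\ and Margolis--Sapir--Weil.
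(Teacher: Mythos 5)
Your proposal is correct and follows essentially the same route as the paper: both reduce the theorem to the correctness of Algorithm~\ref{a:ouralg} via Proposition~\ref{04}, the Ribes--Zalesski\u\i\ theorem that products of $p$-closed subgroups are $p$-closed, and \cite[Corollary~2.4]{Mar01} to see that each tuple $(\mathrm{Cl}_p(H_1),\dots,\mathrm{Cl}_p(H_n))$ occurs in $\mathbb C$. Your write-up merely makes explicit (as a two-sided inclusion of intersections) what the paper's proof states more briefly, and the effectiveness bookkeeping matches the discussion preceding the theorem.
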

\begin{proof}
We verify correctness of Algorithm~\ref{a:ouralg}. Each element of $\mathbb C$ contains $H_1\cdots H_n$ and is $p$-closed for some prime $p$ and thus contains the nil-closure of $H_1\cdots H_n$ by Proposition~\ref{04}.  Therefore, the nil-closure of $H_1\cdots H_n$ is contained in the intersection of $\mathbb C$. On the other hand, for each prime $p$, one has that $\mathrm{Cl}_p(H_1\cdots H_n)=\mathrm{Cl}_p(H_1)\cdots \mathrm{Cl}_p(H_n)$ (by~\cite{RiZa}) appears in $\mathbb C$ by~\cite[Corollary 2.4]{Mar01}.  Thus the intersection of $\mathbb C$ is exactly the nil-closure of $H_1\cdots H_n$ by Proposition~\ref{04}.
\end{proof}

%Let $L\subseteq A^*$ be a rational subset and $\mathsf{H}$ an extension-closed pseudovariety of groups.  Assume that $L$ is given by an $A$-automaton.
%The $\mathsf{H}$-closure of the rational set $L$ can be written as a finite union of sets of the form $gG_{1}\cdots G_{n}$
%with $g \in F(A)$ and the $G_i$ finitely generated $\mathsf H$-closed subgroups, for every integer $1\leq i\leq n$, by~\cite[Proposition 6.24]{Ste2}.  Moreover, generating sets for the subgroups $G_i$ can be effectively computed as long as there is an algorithm that computes a finite generating set for the $\mathsf H$-closure of a finitely generated subgroup $H$ of a free group from a finite generating set of $H$.

Let $L\subseteq F(A)$ be a rational subset  given by an $\widetilde{A}$-automaton.  Then, by the results of~\cite{Pi-Ru} and~\cite{RiZa0},  the pro-$\mathsf G$ closure $L$ can be effectively written as a finite union of sets of the form $gG_{1}\cdots G_{n}$
with $g \in F(A)$ and the $G_i$ finitely generated subgroups, for every integer $1\leq i\leq n$.

\begin{cor} \label{nil-dec-rath}
The nil-closure of a rational subset of $F(A)$ is computable and rational. More precisely, there is an algorithm that, given an $\widetilde A$-au\-tom\-a\-ton accepting $L$, produces an $\widetilde A$-automaton accepting exactly the reduced words in the nil-closure of $L$.
\end{cor}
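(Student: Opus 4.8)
The plan is to reduce the statement to Theorem~\ref{nil-dec} by first passing through the pro-$\mathsf G$ closure. First I would invoke the general principle recalled in the excerpt that $\mathrm{Cl}_{\mathsf H}(X)=\mathrm{Cl}_{\mathsf H}(\mathrm{Cl}_{\mathsf K}(X))$ whenever $\mathsf H\subseteq\mathsf K$; applied with $\mathsf H=\mathsf{G_{nil}}$ and $\mathsf K=\mathsf G$ it gives $\mathrm{Cl}_{nil}(L)=\mathrm{Cl}_{nil}(\mathrm{Cl}_{\mathsf G}(L))$. Then, starting from an $\widetilde A$-automaton accepting $L$, the results of Pin--Reutenauer and Ribes--Zalesski\u\i\ quoted just above allow one to compute effectively elements $g_1,\dots,g_k\in F(A)$ and finitely generated subgroups $G_{j,1},\dots,G_{j,n_j}$, say via their Stallings automata, with $\mathrm{Cl}_{\mathsf G}(L)=\bigcup_{j=1}^{k} g_j G_{j,1}\cdots G_{j,n_j}$.

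Next I would use that the topological closure of a finite union is the union of the closures, and that left translation is a homeomorphism, so that $\mathrm{Cl}_{nil}(gX)=g\,\mathrm{Cl}_{nil}(X)$ as noted in the excerpt. This yields
\[
\mathrm{Cl}_{nil}(L)=\bigcup_{j=1}^{k}\mathrm{Cl}_{nil}\bigl(g_j G_{j,1}\cdots G_{j,n_j}\bigr)=\bigcup_{j=1}^{k} g_j\,\mathrm{Cl}_{nil}\bigl(G_{j,1}\cdots G_{j,n_j}\bigr).
\]
By Theorem~\ref{nil-dec}, for each $j$ the set $\mathrm{Cl}_{nil}(G_{j,1}\cdots G_{j,n_j})$ is rational and one can effectively construct an $\widetilde A$-automaton accepting precisely the reduced words in it.

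Finally I would assemble the answer at the level of automata. Gluing a path labelled by the reduced word $g_j$ in front of the initial vertex and then applying Benois' polynomial-time construction of an automaton accepting the reduced representatives of a rational subset produces an $\widetilde A$-automaton for the reduced words of $g_j\,\mathrm{Cl}_{nil}(G_{j,1}\cdots G_{j,n_j})$; taking a disjoint union of these $k$ automata and applying Benois' procedure once more produces an $\widetilde A$-automaton accepting exactly the reduced words in $\mathrm{Cl}_{nil}(L)$. Since a subset of $F(A)$ is rational precisely when its set of reduced-word representatives is a regular language, this simultaneously establishes that $\mathrm{Cl}_{nil}(L)$ is rational and provides the required algorithm.

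I do not expect a genuine obstacle here: the corollary is essentially the repackaging of Theorem~\ref{nil-dec}, together with the known effective description of the pro-$\mathsf G$ closure of a rational subset, into the language of automata. The only points that require (routine) checking are the topological identities --- that $\mathrm{Cl}_{nil}=\mathrm{Cl}_{nil}\circ\mathrm{Cl}_{\mathsf G}$, that nil-closure commutes with finite unions, and that it commutes with left translation --- and the standard fact that translation, union, and reduction to reduced form are all effective operations on automata over $\widetilde A$.
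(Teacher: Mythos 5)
Your proposal is correct and follows essentially the same route as the paper: reduce to the pro-$\mathsf G$ closure via $\mathrm{Cl}_{nil}(L)=\mathrm{Cl}_{nil}(\mathrm{Cl}_{\mathsf G}(L))$, use Pin--Reutenauer and Ribes--Zalesski\u\i\ to write that closure effectively as a finite union of translates of products of finitely generated subgroups, distribute the nil-closure over the union and the translations, and then apply Theorem~\ref{nil-dec} together with standard effective operations on $\widetilde A$-automata. The extra detail you give about assembling the automata via Benois' construction is exactly what the paper summarizes as ``the known algorithmic properties of rational subsets of free groups.''
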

\begin{proof}
Let $L$ be a rational subset of $F(A)$. Then we have that $\mathrm{Cl}_{nil}(L)=\mathrm{Cl}_{nil}(\mathrm{Cl}_{\mathsf{G}}(L))$.
Now by~\cite{Pi-Ru} and~\cite{RiZa0},  there exist effectively computable finitely generated subgroups $G_{1,j},$ $\ldots,G_{r_j,j}$, for $1\leq j\leq s$ of $F(A)$ and elements $g_1,\ldots,g_s\in F(A)$ such that
\[\mathrm{Cl}_{\mathsf{G}}(L)=g_1G_{1,1}\cdots G_{r_1,1}\cup\ldots\cup g_sG_{1,s}\cdots G_{r_s,s}.\]
Hence, we have
\begin{align*}
\mathrm{Cl}_{nil}(L)&=\mathrm{Cl}_{nil}(g_1G_{1,1}\cdots G_{r_1,1})\cup\ldots\cup \mathrm{Cl}_{nil}(g_sG_{1,s}\cdots G_{r_s,s})\\
&=g_1\mathrm{Cl}_{nil}(G_{1,1}\cdots G_{r_1,1})\cup\ldots\cup g_s\mathrm{Cl}_{nil}(G_{1,s}\cdots G_{r_s,s}).
\end{align*}
It now follows from Theorem~\ref{nil-dec} and the known algorithmic properties of rational subsets of free groups, that we can construct an $\widetilde A$-automaton accepting precisely the reduced words in $\mathrm{Cl}_{nil}(L)$. This completes the proof.
\end{proof}

The next example shows that, for finitely generated subgroups $H_1, \ldots, H_n$ of $F$ with $n>1$, in general, the subset $H_1 \cup H_2 \cup \ldots \cup H_n$ may be $p$-dense, for every prime $p$, without being nil-dense and hence the nil-closure of a rational subset of a free group need not be the intersection of its $p$-closures over all primes $p$.

\begin{example}\label{p-nil-dense}
Consider the subgroups $H$ and $K$ of the free group $F$ from Example~\ref{ex:products.not.closed} and recall that the subgroup $H$ is $p$-dense for every prime except prime $2$ and $K$ is $p$-dense for every prime except prime $3$.  Thus $H\cup K$ is $p$-dense for every prime $p$.  On the other hand, we saw that $H$ and $K$ are both nil-closed in Example~\ref{ex:products.not.closed} and hence $H\cup K$ is nil-closed.  As $H\cup K$ is a proper subset of $F$, we conclude that $H\cup K$ is not nil-dense and hence is not the intersection of its $p$-closures over all primes $p$.  More explicitly, one can check that under the canonical projection $F\to \mathbb Z/6\mathbb Z\times \mathbb Z/6\mathbb Z$, the image of $H\cup K$ is proper.
\end{example}

%By Example~\ref{p-nil-dense}, we deduce that the nil-closure of a
%rational subset in general is not equal to the intersection over all
%primes $p$ of its $p$-closures.
We end this section by investigating different conditions under which the union of finitely many finitely generated subgroups of a free group is $p$-dense for a prime $p$.

\begin{prop} \label{03}
Let $F$ be a group and $H_1, \ldots, H_n$ be subgroups of $F$. If the subset $H_1 \cup H_2 \ldots \cup H_n$ is $p$-dense for some prime $p \geq n$ in $F$, then there exists a positive integer $i$ such that $H_i$ is $p$-dense.
\end{prop}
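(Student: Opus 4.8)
The plan is to argue by contraposition: assuming that none of the $H_i$ is $p$-dense, I will produce a finite $p$-group quotient of $F$ in which the image of $H_1\cup\cdots\cup H_n$ is proper, contradicting $p$-denseness of the union. Since $H_i$ is not $p$-dense, there is a homomorphism $\varphi_i\colon F\to P_i$ onto a finite $p$-group with $\varphi_i(H_i)\subsetneq P_i$; by passing to a suitable quotient I may even assume $\varphi_i(H_i)$ is contained in a maximal subgroup of $P_i$, and since $P_i$ is a $p$-group, that maximal subgroup is normal of index $p$. Composing with $P_i\to P_i/\Phi(P_i)$ or directly with the map to the index-$p$ quotient, I obtain a homomorphism $\psi_i\colon F\to \mathbb{Z}/p\mathbb{Z}$ such that $\psi_i(H_i)=0$. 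Form the diagonal map $\Psi=(\psi_1,\dots,\psi_n)\colon F\to (\mathbb{Z}/p\mathbb{Z})^n$, whose image $Q$ is a finite $p$-group, so $\Psi$ factors through (or already is) a pro-$p$ quotient of $F$.

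The key point is now a counting argument inside the elementary abelian group $Q\subseteq(\mathbb{Z}/p\mathbb{Z})^n$. For each $i$, the set $\Psi(H_i)$ lies in the hyperplane $K_i=\{v\in (\mathbb{Z}/p\mathbb{Z})^n : v_i=0\}$; indeed $\Psi(h)$ for $h\in H_i$ has $i$-th coordinate $\psi_i(h)=0$. Hence $\Psi(H_1\cup\cdots\cup H_n)\subseteq \bigcup_{i=1}^n (K_i\cap Q)$. To finish I need $\bigcup_{i=1}^n (K_i\cap Q)$ to be a proper subset of $Q$, which will show $H_1\cup\cdots\cup H_n$ is not $p$-dense and complete the contrapositive. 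A union of $n$ affine (here, linear) hyperplanes in a $\mathbb{Z}/p\mathbb{Z}$-vector space cannot cover the whole space when $p\ge n$: restricting to a line $\ell$ through the origin in general position, $\ell$ meets each $K_i$ in at most one point unless $\ell\subseteq K_i$; one arranges $\ell\not\subseteq K_i$ for all $i$ (possible as long as $Q$ is not contained in any single $K_i$, which holds because $\psi_i$ is onto), so $|\ell\cap\bigcup K_i|\le n < p+1=|\ell\cap Q|$, leaving a point of $Q$ outside the union.

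The one subtlety requiring care is the degenerate case where the image $Q$ is \emph{smaller} than $(\mathbb{Z}/p\mathbb{Z})^n$: it is conceivable that $Q\subseteq K_i$ for some $i$, i.e.\ that $\psi_i$ is identically zero on $F$, or more generally that the $\psi_i$ are not independent. But $\psi_i$ is surjective onto $\mathbb{Z}/p\mathbb{Z}$ by construction, so $Q\not\subseteq K_i$ for every $i$; thus $K_i\cap Q$ is a proper subgroup of $Q$, of index exactly $p$. The counting argument then applies verbatim with $Q$ in place of $(\mathbb{Z}/p\mathbb{Z})^n$: $Q$ is an $\mathbb{F}_p$-vector space of some dimension $d\ge 1$, each $K_i\cap Q$ is a hyperplane in $Q$, a line in general position in $Q$ has $p+1$ points and meets the union of the $n$ hyperplanes in at most $n\le p$ points, so the union is proper. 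I expect this linear-algebra-over-$\mathbb{F}_p$ step, together with the bookkeeping needed to guarantee a line avoiding all the $K_i\cap Q$ simultaneously, to be the main (though routine) obstacle; everything about reducing to $p$-group quotients and to hyperplanes is standard.
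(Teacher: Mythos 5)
Your proof is correct, but it takes a genuinely different route from the paper's. The paper argues directly: given any onto homomorphism $\varphi\colon F\to P$ with $P$ a finite $p$-group, density of the union forces $P=\varphi(H_1)\cup\cdots\cup\varphi(H_n)$, and a covering lemma for arbitrary finite $p$-groups (Berkovich--Janko, Lemma 116.3(a): a $p$-group covered by proper subgroups needs at least $p+1$ of them) forces some $\varphi(H_i)=P$; it then feeds the product homomorphism $\varphi_1\times\cdots\times\varphi_n$ into this and uses the subdirect-product structure to get a contradiction. You instead work contrapositively and first push each witness of non-density down to a $\mathbb{Z}/p\mathbb{Z}$ quotient killing $H_i$ (using that maximal subgroups of finite $p$-groups are normal of index $p$), so that the covering question only has to be settled in the elementary abelian image $Q\le(\mathbb{Z}/p\mathbb{Z})^n$, where it is elementary linear algebra: at most $p$ hyperplanes cannot cover a nonzero $\mathbb{F}_p$-vector space. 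This buys self-containedness --- no appeal to the Berkovich--Janko lemma --- at the cost of not proving the paper's stronger intermediate fact that in \emph{every} finite $p$-group quotient some $H_i$ already surjects. Both arguments share the essential diagonal-homomorphism step.

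One small repair: a line $\ell$ through the origin in an $\mathbb{F}_p$-vector space has $p$ points, not $p+1$ (you are counting projectively), so the comparison $|\ell\cap\bigcup K_i|\le n<p+1=|\ell\cap Q|$ does not literally close the case $n=p$. But your own setup fixes this: since each $K_i\cap Q$ is a \emph{linear} hyperplane and $\ell\not\subseteq K_i\cap Q$, the intersection $\ell\cap(K_i\cap Q)$ is exactly $\{0\}$, so $\ell$ meets the union only at the origin and any of its $p-1$ nonzero points lies outside. Alternatively, skip the line entirely: $\bigl|\bigcup_{i}(K_i\cap Q)\bigr|\le n(p^{d-1}-1)+1<p^{d}$ whenever $n\le p$ and $d\ge 1$.
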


\begin{proof}
Since the subset $H_1 \cup H_2 \ldots \cup H_n$ is $p$-dense in $F$,
for every homomorphism $\varphi \colon F \rightarrow P$ onto a finite $p$-group $P$, $\varphi(H_1 \cup H_2 \ldots \cup H_n) =P$.
The group $\varphi(H_i)=P_i$ is a subgroup of $P$ and, thus, it is a $p$-group, for $1 \leq i \leq n$. Hence, $P=P_1\cup \ldots \cup P_n$.

Suppose that $P_i \neq P$ for every $1 \leq i \leq n$. Hence, $P$ is noncyclic and it is covered by its proper subgroups. Lemma 116.3.(a) of \cite{Ber3} yields $n \geq p + 1$.
%\bb{Lemma 116.3. Suppose that a noncyclic $p$-group $G$ of order $p^m$ is covered by $n$ proper subgroups $A_1,\ldots, A_n$ as $G = A_1\cup\ldots\cup A_n$. Then $n \geq p + 1$.}\\
This contradicts the assumption that $p \geq n$. Therefore, there exists a positive integer $i$ such that $\varphi(H_i)=P$.

Now suppose that the subgroup $H_i$ is not $p$-dense for every $1 \leq i \leq n$. Hence, there exist a $p$-group $P_i$ and an onto homomorphism $\varphi_i \colon F \rightarrow P_i$ such that $\varphi_i(H_i) \neq P_i$ for every $1 \leq i \leq n$.

Let $\varphi_1 \times \cdots \times \varphi_n\colon F\rightarrow P_1 \times \cdots \times P_n$.
Since $P_1 \times \cdots \times P_n$ is a $p$-group, $G=(\varphi_1 \times \cdots \times \varphi_n)(F)$ is a $p$-group and, thus,
by the above, there exists a positive integer $i$ such that
\[(\varphi_1 \times \cdots \times \varphi_n)(H_i)= G.\]  As $G$ is a subdirect product of $P_1,\ldots, P_n$,
it follows that $\varphi_i(H_i)= P_i$, a contradiction.
\end{proof}

 The next example shows that the hypothesis that $p \geq n$ in Proposition \ref{03} cannot be dropped. For this purpose, we recall that a group $G$ is said to be \emph{minimal non-Abelian} if it is non-Abelian but all its proper subgroups are Abelian \cite{Ber1}.

\begin{example}
Suppose that $P$ is a minimal non-Abelian $p$-group. By \cite[Lemma 116.1.(a)]{Ber3}, $P$ contains a set $X$ of $p+1$ pairwise noncommuting elements of $P$ and no subset with more than $p+1$ elements consists of pairwise noncommuting elements.
%\bb{Lemma 116.1. Let $G$ be a non-Abelian $p$-group. Then, if $G$ is minimal non-Abelian, then $\gamma(G)=p + 1$ ($\gamma(G)=\max\{\abs{M}\mid M \in \Delta(G)\}$, let $M$ be a maximal subset (with respect to inclusion) of pairwise noncommuting elements of a non-Abelian group $G$. We denote the set of all such subsets by $\Delta(G)$).}\\
%Choose such a subset $X$. For each $x\in X$ choose a maximal subgroup $M_x$ containing the subgroup $\langle x,Z(P)\rangle$.
%Note that the subgroup $\langle x,Z(P)\rangle$ is abelian and since $P$ is not abelian, there exists a maximal subgroup of $P$ such that containing it.
%Note that the mapping $x\mapsto M_x$ must be injective. Hence, $P$ has $p+1$ pairwise distinct maximal subgroups $M_1, \ldots, M_{p+1}$.
By \cite[Lemma 1.1 and Exercise 1, page 22]{Ber1}, we have that $[P:Z(P)]=p^2$.
%\bb{Suppose that $G$ is a finite group. If $p^k\mid \abs{G}$ then $G$ has a subgroup with order $p^k$ (1-5-5).\\
%Lemma 1.1. Let $A$ be an abelian subgroup of index $p$ of a non-Abelian $p$-group $G$. Then $\abs{G}= p\abs{G'}\abs{Z(G)}$.\\
%Exercise 1. If $G$ is a minimal non-Abelian $p$-group, then $\abs{G'}= p$ and $G=\langle x, y\rangle$ for some $x, y \in G$.}\\
It follows that, for each $x\in X$, the subgroup $M_x=\langle x,Z(P)\rangle$ is proper (since $P$ is non-Abelian) and of index $p$ (since $x\notin Z(P)$), hence maximal.  Moreover, if $x\neq y$, then $M_x\neq M_y$ as $M_x$ is Abelian and $y$ does not commute with $x$. It follows that $M_x\cap M_y=Z(P)$ for all $x\neq y$ in $X$.
Suppose that $\abs{P}=p^t$. As \[\left|Z(P)\cup \bigcup_{x\in X}M_x\setminus Z(P)\right| = p^{t-2}+(p+1)(p^{t-1}-p^{t-2})=p^t=|P|,\] the group $P$ is covered by its maximal subgroups $M_x$ with $x\in X$. Let $\varphi \colon F \rightarrow P$ be a homomorphism onto $P$, with $F$ a finitely generated free group, and put $P_x = \varphi^{-1}(M_x)$ for $x\in X$. The subgroup $P_x$ is $p$-open. Since $F= \bigcup_{x\in X}P_x$,  the set $\bigcup_{x\in X}P_i$ is $q$-dense for every prime $q$. But the subgroup $P_x$ is not $p$-dense for every $x\in X$, as $\varphi(P_x)=M_x\subsetneq P$.  Thus the bound in Proposition~\ref{03} is tight.
\end{example}

\section{Decidability of the pseudovariety $\mathsf{J} \ast \mathsf{G_{nil}}$}
The reader is referred to~\cite{Almeida:book,Rho-Ste} for basic definitions from finite semigroup theory.
Let $\mathsf{V}$ be a pseudovariety of monoids and $\mathsf{H}$ be a pseudovariety of groups.
If $\varphi\colon M \rightarrow H$ is a surjective morphism with $M$ a monoid and $H$ a group, then $N = \varphi^{-1}(1)$ is a submonoid of $M$. In this case, we say that $M$ is a co-extension of $H$ by $N$. Recall that the pseudovarieties $\mathsf{V}\ast\mathsf{H}$ and $\mathsf{V}\malcev \mathsf{H}$ are generated, respectively, by semidirect products of monoids in $\mathsf{V}$ with groups in $\mathsf{H}$ and by co-extensions of groups in $\mathsf{H}$ by monoids in $\mathsf{V}$. In general, $\mathsf{V}\ast\mathsf{H}\subseteq\mathsf{V}\malcev \mathsf{H}$ (by consideration of the semidirect product projection).

A \emph{relational morphism} $\varphi\colon M\to N$ of monoids is a relation such that $\varphi(m)\neq \emptyset$ for all $m\in M$ and $\varphi(m)\varphi(m')\subseteq \varphi(mm')$ for all $m,m'\in M$.
Recall that a subset $X \subseteq M$ of a finite monoid is called \emph{$\mathsf{H}$-pointlike} if, for every relational morphism $\varphi\colon M \rightarrow H$ with $H \in \mathsf{H}$, there exists $h \in H$ such that $X \subseteq \varphi^{-1}(h)$~\cite{Ste3}. For example the submonoid
of elements of $M$ that relate to $1$ under any relational morphism to a member of $\mathsf{H}$ is $\mathsf{H}$-pointlike; this submonoid is denoted $K_{\mathsf{H}}(M)$ and called the \emph{$\mathsf H$-kernel} of $M$.
An element $(m_1,\ldots,m_k) \in M^k$ is called an \emph{$\mathsf{H}$-liftable $k$-tuple} if, for every relational morphism $\mu \colon M \rightarrow H$ with $H\in \mathsf{H}$, there exist $h_1,\ldots, h_k \in H$ such that $h_1 \ldots h_k = 1$ and $h_i \in \mu(m_i)$ for all $1\leq i\leq k$~\cite{Ste2}. For example, $(m)$ is an $\mathsf H$-liftable $1$-tuple if and only if $m\in K_{\mathsf H}(M)$.

A finite monoid $M$ is a called a \emph{block group} if each element $a\in M$ has at most one generalized inverse, that is, there is at most one element $a'\in M$ such that $aa'a=a$ and $a'aa'=a'$.  The pseudovariety of block groups is denoted $\mathsf{BG}$~\cite{Rho-Ste}.  The power set of a finite group is a typical example of a block group and the pseudovariety of block groups is generated by power sets of finite groups~\cite{Hen-Mar-Pin-Rho}.

Let $\mathrm{reg}(M)$ be the set of regular elements of $M$; that is, those elements $a\in M$ for which $aa'a=a$ and $a'aa'=a'$ for some $a'\in M$. The pseudovariety $\Reg \mathsf{V}$ consists of all monoids $M$ such that $\mathrm{reg}(M)$ generates a monoid in $\mathsf{V}$.

In previous work, the third author~\cite{Ste3} showed that membership was decidable in certain pseudovarieties of the form $\mathsf V\malcev \mathsf{G_{nil}}$ without having computed membership in the $\mathsf{G_{nil}}$-kernel.
% Recall that a subgroup $H$ of the free group $F(A)$ is called \emph{$\mathsf{H}$-extendible} if $\mathcal{A}(H)$ can be embedded into $\mathcal A(K)$ for some $\mathsf{H}$-open subgroup $K$~\cite{Mar01}. The third author proved that if $\mathsf{V}$ is decidable,
%and one can decide if a finitely generated subgroup of a finite rank free group is $\mathsf{H}$-extendible, then $(\Reg\mathsf{V}) \malcev \mathsf{H}$ is decidable~\cite[Corollary 7.2]{Ste3}.
%Hence,
Namely, he proved that the pseudovariety $(\Reg \mathsf{V}) \malcev \mathsf{G_{nil}}$ is decidable for every decidable pseudovariety $\mathsf{V}$.% since $\mathsf{G_{nil}}$-extendibility was proved decidable in~\cite{Mar01}.

Let us recall that $\mathsf{A}$ denotes the pseudovariety of all finite aperiodic monoids, $\mathsf{DS}$ denotes the pseudovariety of all finite monoids whose regular $\mathcal{J}$-classes are subsemigroups, $\mathsf{DA}=\mathsf{DS}\cap \mathsf{A}$, and $\mathsf{J}$ denotes the pseudovariety of all finite $\mathcal{J}$-trivial monoids. For each of the pseudovarieties $\mathsf{A}, \mathsf{DS}$, $\mathsf{DA}$ and $\mathsf{J}$, we have $\Reg\mathsf{V}=\mathsf{V}$. Thus the pseudovarieties $\mathsf{A} \malcev \mathsf{G_{nil}}$, $\mathsf{DS} \malcev \mathsf{G_{nil}}$, $\mathsf{DA} \malcev \mathsf{G_{nil}}$ and $\mathsf{J} \malcev \mathsf{G_{nil}}$ are decidable by the results of~\cite{Ste2} and~\cite{Mar01}.

We shall now prove that the pseudovariety $\mathsf{V} \malcev \mathsf{G_{nil}}$ has decidable membership for every decidable pseudovariety $\mathsf{V}$. (Recall that a pseudovariety is called \emph{decidable} if it has a decidable membership problem.)
Before proving our claim, we recall a proposition from~\cite{Ste2} that establishes a relationship between
$\mathsf{H}$-liftable $k$-tuples and $\mathsf{H}$-closure of rational subsets. The reader should recall that if $M$ is a finite $A$-generated monoid (with $A$ finite), then, for any $m\in M$, then the language of words $w\in A^*$ mapping to $m$ (i.e., $[w]_M=m$) is rational in $A^*$ ~\cite{EilenbergA}, and hence in the free group $F(A)$.

\begin{prop}\cite[Proposition 7.20]{Ste2}\label{Block_2}
Let $M$ be a finite $A$-generated monoid, $(m_1,\ldots,m_k) \in M^k$, $L_{m_i} = \{u \in A^{\ast}\mid [u]_M = m_i\}$, and $\mathsf{H}$ be a pseudovariety of groups. Then $(m_1,\ldots,m_k)$ is an $\mathsf{H}$-liftable $k$-tuple if and only if $1 \in \mathrm{Cl}_{\mathsf{H}}(L_{m_1}\cdots L_{m_k})$ where the closure is taken in the free group $F(A)$ on $A$.
\end{prop}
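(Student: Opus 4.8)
The plan is to recast both sides of the equivalence as statements quantified over homomorphisms from $F(A)$ into groups of $\mathsf{H}$, using Lemma~\ref{CloX} on the right-hand side and the standard dictionary between relational morphisms out of $M$ and homomorphisms out of the free group $F(A)$ on the left.

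First I would reformulate the right-hand side. Since each $L_{m_i}$ consists of words over $A$, which are positive elements of $F(A)$, the product $L_{m_1}\cdots L_{m_k}$ taken in $F(A)$ is just the set of concatenations $\{u_1\cdots u_k : u_i\in L_{m_i}\}\subseteq A^{\ast}$; hence, for any homomorphism $\varphi\colon F(A)\to H$, one has $\varphi(L_{m_1}\cdots L_{m_k})=\varphi(L_{m_1})\cdots\varphi(L_{m_k})$. Combining this with Lemma~\ref{CloX}, the condition $1\in\mathrm{Cl}_{\mathsf{H}}(L_{m_1}\cdots L_{m_k})$ is equivalent to: for every $H\in\mathsf{H}$ and every (onto) homomorphism $\varphi\colon F(A)\to H$ there exist words $u_i\in L_{m_i}$ with $\varphi(u_1)\cdots\varphi(u_k)=1$.

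Next I would set up the dictionary. Given a homomorphism $\varphi\colon F(A)\to H$, put $\mu_\varphi(m)=\varphi(L_m)$ for $m\in M$; this is a relational morphism $M\to H$ because $L_m\neq\emptyset$ (as $M$ is $A$-generated) and $L_mL_{m'}\subseteq L_{mm'}$. Conversely, given a relational morphism $\mu\colon M\to H$, choose for each $a\in A$ an element $h_a\in\mu([a]_M)$ --- possible since $\mu([a]_M)\neq\emptyset$ --- and let $\varphi\colon F(A)\to H$ be the unique homomorphism with $\varphi(a)=h_a$; a short induction on word length (the empty word mapping to $1_H\in\mu(1_M)$) using $\mu(p)\mu(q)\subseteq\mu(pq)$ gives $\varphi(w)\in\mu([w]_M)$ for all $w\in A^{\ast}$, i.e.\ $\mu_\varphi(m)=\varphi(L_m)\subseteq\mu(m)$ for all $m\in M$. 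Replacing $H$ by the subgroup $\varphi(F(A))$, which still lies in $\mathsf{H}$, this shows every relational morphism to a member of $\mathsf{H}$ is refined by one of the form $\mu_\varphi$ with $\varphi$ onto.

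With the dictionary in place, both implications are essentially formal. For the forward direction, assume $(m_1,\ldots,m_k)$ is $\mathsf{H}$-liftable; given an onto $\varphi\colon F(A)\to H$, apply liftability to $\mu_\varphi$ to obtain $h_i\in\mu_\varphi(m_i)=\varphi(L_{m_i})$ with $h_1\cdots h_k=1$, choose $u_i\in L_{m_i}$ with $\varphi(u_i)=h_i$, and note $\varphi(u_1\cdots u_k)=1$; by the reformulation of the right-hand side, $1\in\mathrm{Cl}_{\mathsf{H}}(L_{m_1}\cdots L_{m_k})$. For the reverse direction, assume $1\in\mathrm{Cl}_{\mathsf{H}}(L_{m_1}\cdots L_{m_k})$; given any relational morphism $\mu\colon M\to H$, take the homomorphism $\varphi$ from the dictionary with $\varphi(L_m)\subseteq\mu(m)$ and set $H'=\varphi(F(A))\le H$, so $\varphi\colon F(A)\to H'$ is onto and $H'\in\mathsf{H}$; applying the right-hand condition to this $\varphi$ gives $u_i\in L_{m_i}$ with $\varphi(u_1)\cdots\varphi(u_k)=1$, and $h_i=\varphi(u_i)\in\varphi(L_{m_i})\subseteq\mu(m_i)$ witness liftability. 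I expect the one genuinely non-formal point to be the asymmetry between relational morphisms out of $M$ and homomorphisms out of $F(A)$: it is precisely what forces the passage to the image subgroup and the use of $A$-generation of $M$ together with the universal property of $F(A)$; the remaining steps are bookkeeping about positive words.
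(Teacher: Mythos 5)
Your proof is correct and follows the standard argument (the paper itself imports this statement as Proposition 7.20 of [Ste2] without proof): the dictionary between relational morphisms $M\to H$ and homomorphisms $F(A)\to H$, combined with Lemma~\ref{CloX} and the observation that products of the positive languages $L_{m_i}$ in $F(A)$ are just concatenations, is exactly how this equivalence is established. The only step you assert without justification is that $1_H\in\mu(1_M)$; this does hold, because $\mu(1_M)\mu(1_M)\subseteq\mu(1_M)$ makes $\mu(1_M)$ a nonempty subsemigroup of the finite group $H$, hence a subgroup containing $1_H$.
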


\begin{thm}\label{Ker-nil}
The nilpotent kernel $K_{\mathsf{G_{nil}}}(M)$ of a finite monoid $M$ is computable.
\end{thm}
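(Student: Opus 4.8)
The plan is to reduce the computation of $K_{\mathsf{G_{nil}}}(M)$ to the algorithm of Corollary~\ref{nil-dec-rath} for the nil-closure of a rational subset of a free group, via the case $k=1$ of Proposition~\ref{Block_2}. First I would fix a finite generating set $A$ for $M$ and let $\pi\colon A^*\to M$ be the canonical projection. For each $m\in M$ set $L_m=\{u\in A^*\mid [u]_M=m\}=\pi^{-1}(m)$; this is a regular language, and a finite $A$-automaton accepting it is immediately available (take state set $M$, initial state $1$, unique terminal state $m$, and transitions $s\xrightarrow{\,a\,}s\,\pi(a)$). Since every word of $A^*$ is already reduced as an element of $\widetilde A^*$, this same automaton, read over $\widetilde A$, accepts precisely the reduced words representing the elements of $L_m$ when $L_m$ is regarded as a rational subset of $F(A)$.

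Next I would apply Corollary~\ref{nil-dec-rath} to this automaton to produce, effectively, a finite $\widetilde A$-automaton $\mathcal A_m$ accepting exactly the reduced words in $\mathrm{Cl}_{nil}(L_m)$. Because the empty word is the unique reduced word representing $1\in F(A)$, we have $1\in\mathrm{Cl}_{nil}(L_m)$ if and only if $\mathcal A_m$ accepts the empty word, that is, if and only if $\mathcal A_m$ has an initial state that is also terminal; this is plainly decidable. By Proposition~\ref{Block_2} with $k=1$, together with the remark recorded alongside it that $(m)$ is a $\mathsf{G_{nil}}$-liftable $1$-tuple exactly when $m\in K_{\mathsf{G_{nil}}}(M)$, this test decides whether $m$ belongs to the nilpotent kernel. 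Running it over the finitely many $m\in M$ then yields $K_{\mathsf{G_{nil}}}(M)$ explicitly.

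The substantive work has all been pushed into Corollary~\ref{nil-dec-rath} (which itself rests on Theorem~\ref{nil-dec}, Proposition~\ref{04}, and the Pin--Reutenauer/Ribes--Zalesski\u\i{} computation of the pro-$\mathsf G$ closure of a rational subset), so within the present argument there is no real obstacle beyond careful bookkeeping: one must keep track of the passage between a rational subset of $A^*$, the same set viewed inside $F(A)$, and its set of reduced representatives, and then observe that, once the automaton for the nil-closure is in hand, deciding whether $1\in\mathrm{Cl}_{nil}(L_m)$ is merely an inspection of that automaton. In particular, the finiteness of $M$ is used only to reduce the kernel computation to finitely many instances of this membership test.
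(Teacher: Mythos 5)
Your proposal is correct and follows essentially the same route as the paper: reduce membership of $m$ in $K_{\mathsf{G_{nil}}}(M)$ to the test $1\in\mathrm{Cl}_{nil}(L_m)$ via Proposition~\ref{Block_2} with $k=1$, and decide that test using the automaton produced by Corollary~\ref{nil-dec-rath}. The extra bookkeeping you supply (the explicit automaton for $L_m$, the fact that words over $A$ are already reduced in $\widetilde A^*$, and the empty-word check) is accurate and merely makes explicit what the paper leaves implicit.
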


\begin{proof}
Fix a finite generating set $A$ for $M$.
We have that $m\in K_{\mathsf{G_{nil}}}(M)$ if and only if $(m)$ is a $\mathsf{G_{nil}}$-liftable 1-tuple.
By Proposition~\ref{Block_2}, $(m)$ is a $\mathsf{G_{nil}}$-liftable 1-tuple if and only if $1 \in \mathrm{Cl}_{nil}(L_{m})$ in $F(A)$.
By Corollary~\ref{nil-dec-rath}, $\mathrm{Cl}_{nil}(L_{m})$ is an effectively computable rational subset of $F(A)$ and the result follows.
\end{proof}

This yields the following corollary.

\begin{cor}
Let $\mathsf{V}$ be a decidable pseudovariety. Then the pseudovariety $\mathsf{V} \malcev \mathsf{G_{nil}}$ is decidable.
\end{cor}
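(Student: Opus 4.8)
The plan is to reduce deciding membership in $\mathsf{V}\malcev\mathsf{G_{nil}}$ to two effective steps: computing the $\mathsf{G_{nil}}$-kernel of the input monoid, which is exactly what Theorem~\ref{Ker-nil} provides, followed by a single membership test in $\mathsf{V}$. The reduction rests on the standard fact that, for any pseudovariety of groups $\mathsf{H}$ and any finite monoid $M$, one has $M\in\mathsf{V}\malcev\mathsf{H}$ if and only if $K_{\mathsf{H}}(M)\in\mathsf{V}$.

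To prove this fact I would first observe that $\mathsf{V}\malcev\mathsf{H}$ is precisely the class of finite monoids $M$ admitting a relational morphism $\tau\colon M\to H$ with $H\in\mathsf{H}$ and $\tau^{-1}(1)\in\mathsf{V}$: a surjective morphism witnessing a co-extension is such a relational morphism, and conversely, given such a $\tau$, the graph $R=\{(m,h)\in M\times H\mid h\in\tau(m)\}$ is a submonoid of $M\times H$ whose projection to $H$ has image in $\mathsf{H}$ and preimage of $1$ isomorphic to $\tau^{-1}(1)\in\mathsf{V}$, so $R$ is a co-extension and $M$ is a quotient of $R$; one also checks routinely that the class so described is closed under submonoids, quotients and finite products, hence contains the pseudovariety generated by the co-extensions. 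Given this reformulation, the ``only if'' direction of the characterization is immediate, since $K_{\mathsf{H}}(M)$ is a submonoid of $\tau^{-1}(1)$ and $\mathsf{V}$ is closed under submonoids.

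For the ``if'' direction I would use a compactness argument. Fixing a finite generating set $A$ for $M$, each homomorphism $\psi\colon A^*\to H$ with $H\in\mathsf{H}$ induces a relational morphism $\tau_\psi\colon M\to H$ by $\tau_\psi(m)=\{\psi(w)\mid [w]_M=m\}$; every relational morphism from $M$ into a group of $\mathsf{H}$ is refined by some $\tau_\psi$ (choose the images of the generators), and $\tau_{\psi_1\times\psi_2}^{-1}(1)\subseteq\tau_{\psi_1}^{-1}(1)\cap\tau_{\psi_2}^{-1}(1)$. Hence the subsets $\tau_\psi^{-1}(1)$ of the finite set $M$ form a downward directed family whose least element equals $\bigcap_\psi\tau_\psi^{-1}(1)=K_{\mathsf{H}}(M)$. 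Thus $K_{\mathsf{H}}(M)=\tau_{\psi_0}^{-1}(1)$ for a single relational morphism $\tau_{\psi_0}$, and if $K_{\mathsf{H}}(M)\in\mathsf{V}$ then $M\in\mathsf{V}\malcev\mathsf{H}$ via $\tau_{\psi_0}$.

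With the characterization in hand the corollary follows immediately: on input a finite monoid $M$, compute $K_{\mathsf{G_{nil}}}(M)$ using the algorithm of Theorem~\ref{Ker-nil}, then use the assumed decidability of $\mathsf{V}$ to test whether this finite monoid lies in $\mathsf{V}$; the answer decides whether $M\in\mathsf{V}\malcev\mathsf{G_{nil}}$. The only substantive ingredient is the computability of the $\mathsf{G_{nil}}$-kernel, already secured by Theorem~\ref{Ker-nil} (and ultimately by Corollary~\ref{nil-dec-rath}); within the corollary itself, the point requiring care is the compactness step, which guarantees that the kernel is realized by one relational morphism rather than merely as an intersection over infinitely many.
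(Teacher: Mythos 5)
Your proof is correct and follows essentially the same route as the paper: reduce membership in $\mathsf{V}\malcev\mathsf{G_{nil}}$ to computing $K_{\mathsf{G_{nil}}}(M)$ via Theorem~\ref{Ker-nil} and then testing whether the kernel lies in $\mathsf{V}$. The only difference is that you supply a (correct) proof of the standard equivalence $M\in\mathsf{V}\malcev\mathsf{H}\iff K_{\mathsf{H}}(M)\in\mathsf{V}$, including the compactness step, where the paper simply cites it from Henckell--Margolis--Pin--Rhodes.
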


\begin{proof}
It is well known that  $M \in \mathsf{V}\malcev \mathsf{G_{nil}}$ if and only if $K_{\mathsf{G_{nil}}}(M)\in \mathsf{V}$ (cf.~\cite[Theorem 3.4]{Hen-Mar-Pin-Rho}).
As $\mathsf{V}$ has decidable membership and $K_{\mathsf{G_{nil}}}(M)$ is computable by Theorem~\ref{Ker-nil}, the pseudovariety $\mathsf{V} \malcev \mathsf{G_{nil}}$ is decidable.
\end{proof}

Recall that $\mathsf{CR}$ denotes the pseudovariety of all finite completely regular monoids (that is, monoids satisfying an identity of the form $x^m=x$ with $m>1$).
In particular, the pseudovariety $\mathsf{CR} \malcev \mathsf{G_{nil}}$ is decidable.
It is well known, see for instance~\cite{Hen-Mar-Pin-Rho}, that, if $\mathsf{V}$ is local in the sense of Tilson~\cite{Tilson}, then $\mathsf{V} \ast \mathsf{H} = \mathsf{V} \malcev \mathsf{H}$. The pseudovarieties $\mathsf{A}$, $\mathsf{CR}$~\cite{Peter}, $\mathsf{DS}$~\cite{Jo-Tr} and $\mathsf{DA}$~\cite{Almeida:1996c} are local.
Therefore, the pseudovarieties $\mathsf{CR}\ast \mathsf{G_{nil}}$, $\mathsf{A}\ast \mathsf{G_{nil}}$, $\mathsf{DS}\ast \mathsf{G_{nil}}$ and $\mathsf{DA}\ast \mathsf{G_{nil}}$ are all decidable, where the last three of these results were already proved in~\cite[Corollary 7.3]{Ste3}.

Auinger and the third author defined arboreous pseudovarieties of groups in terms of certain properties of their relatively free profinite groups. This definition and more details can be found in~\cite{Auinger&Steinberg:2001a}.
They proved that a pseudovariety of groups $\mathsf{H}$ is arboreous if and only if $\mathsf{J}\malcev \mathsf{H} = \mathsf{J}\ast\mathsf{H}$~\cite[Theorem 8.3]{Auinger&Steinberg:2001a}. They also noted that each arboreous pseudovariety is join irreducible~\cite[Corollary 2.13]{Auinger&Steinberg:2001a} where a pseudovariety $\mathsf{H}$ is join irreducible if $\mathsf{H} =  \mathsf{H_1}\vee \mathsf{H_2}$ implies that $\mathsf{H} =\mathsf{H_1}$ or $\mathsf{H} = \mathsf{H_2}$.

Since the pseudovariety $\mathsf{G_{nil}}$ is not join irreducible, it is not arboreous. Thus $\mathsf{J}\malcev \mathsf{G_{nil}}\neq\mathsf{J}\ast \mathsf{G_{nil}}$. As we have already seen, the pseudovariety $\mathsf{J}\malcev \mathsf{G_{nil}}$ is decidable (cf.~\cite[Corollary 8.1]{Ste3}). We now show that the pseudovariety $\mathsf{J}\ast \mathsf{G_{nil}}$ is also decidable, a new result. Before proving this, we shall first recall two results from~\cite{Ste3}.

\begin{thm}\cite[Theorem 8.2]{Ste3}\label{Block_1}
Let $M$ be a block group and $\mathsf{H}$ a pseudovariety of groups. Then $M\in \mathsf{J}\ast\mathsf{H}$ if and only if, for every pair $\{\alpha,\beta\}$ of regular elements of $M$ which form an $\mathsf{H}$-pointlike set, one has that $\alpha\alpha^{-1}\beta\beta^{-1} = \alpha\beta^{-1}$.
\end{thm}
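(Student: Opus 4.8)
Since the statement is a biconditional, the plan is to establish the two implications separately, with the same underlying tool in each: the description of membership in a semidirect product $\mathsf J\ast\mathsf H$ via relational morphisms and derived categories. Recall that, by Tilson's Derived Category Theorem, $M\in\mathsf J\ast\mathsf H$ if and only if there is a finite group $G\in\mathsf H$ and a relational morphism $\tau\colon M\to G$ whose derived category belongs to the global $g\mathsf J$; unwinding this, after reading each $m\in M$ with $1\in\tau(m)$ as a loop at an object of $G$, the loop monoids so generated must be $\mathcal J$-trivial. The reason the two-element pointlike condition should govern membership is that $\{\alpha,\beta\}$ is $\mathsf H$-pointlike precisely when $\alpha$ and $\beta$ are confused by every relational morphism to a member of $\mathsf H$, and such confusions are exactly what produce coterminal paths, hence potentially nontrivial elements, in the loop monoids of the derived category.

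For the ``only if'' direction I would assume $M\in\mathsf J\ast\mathsf H$ and fix a witnessing relational morphism $\tau\colon M\to G$ as above. Given an $\mathsf H$-pointlike pair $\{\alpha,\beta\}$ of regular elements, pointlikeness applied to $\tau$ yields $g\in G$ with $\alpha,\beta\in\tau^{-1}(g)$. One uses two elementary facts: every idempotent of $M$ relates to $1$ under any relational morphism to a group, so $\alpha\alpha^{-1}$, $\alpha^{-1}\alpha$, $\beta\beta^{-1}$, $\beta^{-1}\beta$ all lie in $\tau^{-1}(1)$; and multiplying the pointlike pair by the singletons $\{\alpha^{-1}\}$ or $\{\beta^{-1}\}$ keeps it pointlike, so pairs such as $\{\alpha\beta^{-1},\beta\beta^{-1}\}$ and $\{\alpha\alpha^{-1},\beta\alpha^{-1}\}$ are $\mathsf H$-pointlike as well. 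Feeding these elements into the loop monoid of the derived category at the object $1$ --- so that $\alpha\alpha^{-1}\beta\beta^{-1}$ and $\alpha\beta^{-1}$ appear as loop labels there --- and invoking $\mathcal J$-triviality of that loop monoid, one derives $\alpha\alpha^{-1}\beta\beta^{-1}=\alpha\beta^{-1}$. The block group hypothesis is used here to pin down the (unique) generalized inverses appearing in these expressions and to guarantee that the idempotents in a fixed $\mathcal R$- or $\mathcal L$-class of $M$ are unique, leaving no room for the two loop labels to differ.

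For the ``if'' direction I would assume the identity $\alpha\alpha^{-1}\beta\beta^{-1}=\alpha\beta^{-1}$ holds for all $\mathsf H$-pointlike pairs of regular elements and verify the derived-category criterion for a relational morphism $\tau\colon M\to G$, with $G\in\mathsf H$, that detects the $\mathsf H$-pointlike pairs of $M$ (such $\tau$ exist by general pointlike theory, passing if necessary to a sufficiently fine finite quotient of a relatively free pro-$\mathsf H$ group). Since $M$ is a block group, $\tau^{-1}(1)$ is again a block group, and the loop monoids of the derived category are generated by images of products of regular elements of $M$, their unique generalized inverses, and idempotents. As in the computation of $\mathsf{BG}=\mathsf J\malcev\mathsf G$, one reduces checking that each loop monoid lies in $\mathsf J$ --- equivalently, is both $\mathcal R$-trivial and $\mathcal L$-trivial --- to controlling the expressions $\alpha\beta^{-1}$ and $\alpha\alpha^{-1}\beta\beta^{-1}$ for coterminal $\alpha,\beta$; coterminality of the corresponding paths is exactly the statement that $\{\alpha,\beta\}$ is $\mathsf H$-pointlike, so the standing hypothesis forces these two expressions to coincide, which is precisely what makes the generators of every loop monoid satisfy the defining pseudoidentities of $\mathsf J$. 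Hence $M\in\mathsf J\ast\mathsf H$.

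The main obstacle is the reduction, in both directions, from the global condition on the derived category --- a priori a condition on arbitrarily long products in $M$, i.e. on all $\mathsf H$-inevitable graphs over $M$ --- to the purely local two-element condition on pointlike pairs of regular elements; this is where the block group hypothesis, through the uniqueness of generalized inverses and the resulting rigidity of the regular $\mathcal J$-classes, has to be used in an essential way. A further technical point is that one must work with the global $g\mathsf J$ rather than merely checking loop monoids in isolation, and the passage between these must be controlled via the known description of $g\mathsf J$. Packaging this reduction cleanly, rather than any individual computation, is the heart of the argument.
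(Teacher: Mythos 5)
First, a point of comparison: the paper does not prove this statement at all --- it is quoted verbatim from \cite[Theorem 8.2]{Ste3} and used as a black box --- so there is no internal proof to measure your argument against. Judged on its own terms, your proposal correctly assembles the standard toolkit (Tilson's Derived Category Theorem, detection of all $\mathsf{H}$-pointlike pairs by a single sufficiently fine relational morphism, stability of pointlikes under multiplication, the fact that idempotents and inverses of regular elements relate to $1$ and to inverses under a relational morphism to a group), but the argument itself is absent at exactly the point you yourself flag as ``the heart of the argument,'' and the shortcut you use in its place is invalid.

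Concretely: (1) In both directions you reduce membership of the derived category in $g\mathsf{J}$ to $\mathcal{J}$-triviality of its loop monoids. But $\mathsf{J}$ is not local, i.e.\ $g\mathsf{J}\subsetneq \ell\mathsf{J}$, so in the ``if'' direction verifying that every loop monoid lies in $\mathsf{J}$ does not show that the derived category divides a $\mathcal{J}$-trivial monoid. The correct condition on the derived category (Knast-type path identities) is stated in terms of pairs of coterminal arrows between two \emph{distinct} objects, and it is precisely such configurations --- $\alpha,\beta$ giving coterminal arrows $1\to g$ and $\alpha^{-1},\beta^{-1}$ giving coterminal arrows $g\to 1$ --- that produce the four loops $\alpha\alpha^{-1},\alpha\beta^{-1},\beta\alpha^{-1},\beta\beta^{-1}$ and explain why the identity $\alpha\alpha^{-1}\beta\beta^{-1}=\alpha\beta^{-1}$ is the right one; an argument confined to loop monoids cannot see this. (2) In the ``only if'' direction you assert that $\mathcal{J}$-triviality of the loop monoid at the object $1$ ``derives'' the identity, but you never exhibit the needed mutual $\mathcal{J}$-divisibility of $\alpha\beta^{-1}$ and $\alpha\alpha^{-1}\beta\beta^{-1}$ inside $\tau^{-1}(1)$; at least one of the two containments is not apparent even using uniqueness of generalized inverses in a block group, and the known proofs do not obtain the identity this way. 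So what you have is a plausible plan with the right ingredients, but the reduction from the global condition on the derived category to the two-element identity on regular pointlike pairs --- which is the entire content of the theorem --- is not carried out.
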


%Pin and Reutenauer~\cite{Pi-Ru}
%proved that the closure of a rational subset of the free group is a finite union of sets of the form $gG_1 G_2 \cdots G_r$, where $g\in F(A)$ and $G_1,\ldots,G_r$ are finitely generated subgroups of $F(A)$~\cite[Theorem 2.4]{Pi-Ru}. The third author connected the rational language associated to a regular element of a finite monoid to the corresponding language recognized by its Sch\"utzenberger graph. For a finite $A$-generated monoid $M$ and $m \in \mathrm{reg}(M)$, one can effectively compute a finitely generated subgroup $H\subseteq F(A)$ and $w\in F(A)$ such that $\mathrm{Cl}_{\mathsf{G}}(L_{m})=Hw$~\cite[Corollary 7.16]{Ste2}.

\begin{lem}\cite[Lemma 7.23]{Ste2}\label{Block_3}
Let $\mathsf{H}$ be a pseudovariety of groups, $M$ a block group, and $\alpha, \beta \in \mathrm{reg} (M)$. Then $\{\alpha, \beta\}$ is $\mathsf{H}$-pointlike if and only if $(\alpha, \beta^{-1})$ is a $\mathsf{H}$-liftable 2-tuple.
\end{lem}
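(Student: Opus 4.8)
The plan is to prove both implications directly from the definitions, working with relational morphisms and reducing everything to one auxiliary observation about how such a morphism into a group treats a regular element together with its generalized inverse. Note first that, since $M$ is a block group and $\beta\in\mathrm{reg}(M)$, the element $\beta$ has a \emph{unique} generalized inverse $\beta^{-1}$; symmetrically, $\beta$ is the unique generalized inverse of $\beta^{-1}$, and $\beta\beta^{-1}$, $\beta^{-1}\beta$ are idempotents of $M$. (The block group hypothesis is really only needed to make the notation $\beta^{-1}$ unambiguous; the equivalence itself would hold, suitably quantified over generalized inverses, in general.)

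The key step would be the following auxiliary fact: if $\mu\colon M\to H$ is a relational morphism with $H$ a finite group and $\gamma\in M$ has a generalized inverse $\gamma'$, then $k\in\mu(\gamma)$ forces $k^{-1}\in\mu(\gamma')$. To see this, observe that $\gamma'\gamma$ is idempotent, so $\mu(\gamma'\gamma)$ is a nonempty subsemigroup of the finite group $H$, hence a subgroup; in particular it contains $1$ and is closed under inverses. Pick any $k_0\in\mu(\gamma')$ (possible since $\mu$ is a relational morphism). From $\mu(\gamma')\mu(\gamma)\subseteq\mu(\gamma'\gamma)$ we get $k_0k\in\mu(\gamma'\gamma)$, hence $(k_0k)^{-1}\in\mu(\gamma'\gamma)$, and using $\gamma'\gamma\gamma'=\gamma'$,
\[
k^{-1}=(k_0k)^{-1}k_0\in\mu(\gamma'\gamma)\mu(\gamma')\subseteq\mu(\gamma'\gamma\gamma')=\mu(\gamma').
\]

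Granting this, both directions are short. For the forward implication, assume $\{\alpha,\beta\}$ is $\mathsf{H}$-pointlike and fix a relational morphism $\mu\colon M\to H$ with $H\in\mathsf{H}$; choose $h\in H$ with $\alpha,\beta\in\mu^{-1}(h)$, so that $h\in\mu(\alpha)\cap\mu(\beta)$. Applying the auxiliary fact with $\gamma=\beta$, $\gamma'=\beta^{-1}$ gives $h^{-1}\in\mu(\beta^{-1})$, and since $h\cdot h^{-1}=1$ this exhibits $(\alpha,\beta^{-1})$ as an $\mathsf{H}$-liftable $2$-tuple. For the converse, assume $(\alpha,\beta^{-1})$ is $\mathsf{H}$-liftable and fix $\mu\colon M\to H$ with $H\in\mathsf{H}$; there are $h_1\in\mu(\alpha)$ and $h_2\in\mu(\beta^{-1})$ with $h_1h_2=1$, i.e.\ $h_2=h_1^{-1}$. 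Applying the auxiliary fact with $\gamma=\beta^{-1}$, $\gamma'=\beta$ (legitimate since $\beta$ is the generalized inverse of $\beta^{-1}$) turns $h_1^{-1}=h_2\in\mu(\beta^{-1})$ into $h_1\in\mu(\beta)$; hence $h_1\in\mu(\alpha)\cap\mu(\beta)$, so $\{\alpha,\beta\}\subseteq\mu^{-1}(h_1)$, witnessing $\mathsf{H}$-pointlikeness.

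I do not expect a genuine obstacle here; this is essentially~\cite[Lemma 7.23]{Ste2}, and the only delicate points are the finiteness step promoting the subsemigroup $\mu(\gamma'\gamma)$ to a subgroup of $H$, and keeping straight that in a block group the generalized-inverse relation is symmetric, so that the auxiliary fact may be applied with the roles of $\beta$ and $\beta^{-1}$ interchanged.
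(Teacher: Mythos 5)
Your argument is correct: the auxiliary fact that a relational morphism $\mu\colon M\to H$ into a finite group sends a generalized inverse pair to an inverse pair (via the observation that $\mu(\gamma'\gamma)$ is a nonempty subsemigroup, hence a subgroup, of $H$) is exactly the standard mechanism behind this equivalence, and both implications then follow as you describe. The paper itself offers no proof -- it simply cites \cite[Lemma 7.23]{Ste2} -- so there is nothing to contrast with; your write-up is a correct, self-contained rendering of the cited result, including the needed remark that in a block group the generalized-inverse relation is symmetric so the auxiliary fact can be applied in both directions.
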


Now by Proposition~\ref{Block_2}, Theorem~\ref{Block_1},  and Lemma~\ref{Block_3}, the following result can be seen as a corollary of the decidability of nil-closure in the free group of a regular language.

\begin{thm}\label{Main_2}
The pseudovariety $\mathsf{J}\ast \mathsf{G_{nil}}$ is decidable.
\end{thm}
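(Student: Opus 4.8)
The plan is to combine the characterization of $\mathsf{J}\ast\mathsf{H}$ on block groups from Theorem~\ref{Block_1} with the decidability of nil-closures of rational subsets of free groups, thereby reducing membership in $\mathsf{J}\ast\mathsf{G_{nil}}$ to finitely many instances of the problem of deciding whether $1$ belongs to the nil-closure of a given regular language.

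First I would dispose of the non-block-group case. Since $\mathsf{J}\ast\mathsf{G_{nil}}\subseteq\mathsf{J}\malcev\mathsf{G_{nil}}\subseteq\mathsf{J}\malcev\mathsf{G}=\mathsf{BG}$, a finite monoid that is not a block group cannot lie in $\mathsf{J}\ast\mathsf{G_{nil}}$, and being a block group is decidable (check directly that each element has at most one generalized inverse). So we may assume the input monoid $M$ is a block group and fix a finite generating set $A$ for it; in particular every regular element $m$ of $M$ has a well-defined generalized inverse $m^{-1}$, and $mm^{-1}$, $m^{-1}m$ are idempotents of $M$.

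Next I would invoke Theorem~\ref{Block_1} with $\mathsf{H}=\mathsf{G_{nil}}$: one has $M\in\mathsf{J}\ast\mathsf{G_{nil}}$ if and only if for every pair $\{\alpha,\beta\}$ of regular elements of $M$ that is $\mathsf{G_{nil}}$-pointlike the equality $\alpha\alpha^{-1}\beta\beta^{-1}=\alpha\beta^{-1}$ holds. Since $\mathrm{reg}(M)$ is finite, there are only finitely many such pairs, and for each one the displayed equality is tested directly in $M$; so everything comes down to deciding, for a fixed pair $\{\alpha,\beta\}$ of regular elements, whether it is $\mathsf{G_{nil}}$-pointlike. Here I would chain the available reductions: by Lemma~\ref{Block_3}, $\{\alpha,\beta\}$ is $\mathsf{G_{nil}}$-pointlike if and only if $(\alpha,\beta^{-1})$ is a $\mathsf{G_{nil}}$-liftable $2$-tuple, and by Proposition~\ref{Block_2} this holds if and only if $1\in\mathrm{Cl}_{nil}(L_\alpha L_{\beta^{-1}})$ in $F(A)$, where $L_m=\{u\in A^*\mid [u]_M=m\}$.

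Finally I would make the last step effective. The languages $L_\alpha$ and $L_{\beta^{-1}}$ are regular — an automaton is read off by taking $M$ as state set, the action of $A$ as transitions, $1$ as start state, and $\alpha$ (respectively $\beta^{-1}$) as the unique final state — so $L_\alpha L_{\beta^{-1}}$ is a rational subset of $A^*$, hence of $F(A)$, and an $\widetilde A$-automaton accepting it is effectively computable. By Corollary~\ref{nil-dec-rath} we can then effectively construct an $\widetilde A$-automaton $\mathcal A'$ accepting exactly the reduced words in $\mathrm{Cl}_{nil}(L_\alpha L_{\beta^{-1}})$, and $1$ lies in that closure precisely when $\mathcal A'$ accepts the empty word, which is trivially decidable. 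Assembling these steps gives the algorithm. The substantive content — computability of the nil-closure of a rational subset of a free group — is already in hand through Corollary~\ref{nil-dec-rath}, so the proof is essentially bookkeeping; the only points needing care are the reduction to block groups (so that Theorem~\ref{Block_1}, which is stated only for block groups, governs the general case) and the consistent use of $m^{-1}$ for the unique generalized inverse furnished by the block-group hypothesis. I do not anticipate any serious obstacle beyond these.
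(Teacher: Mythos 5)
Your proposal is correct and follows essentially the same route as the paper: reduce to block groups via $\mathsf{J}\ast\mathsf{G_{nil}}\subseteq\mathsf{BG}$, apply Theorem~\ref{Block_1}, convert the pointlike test to a liftable-tuple test via Lemma~\ref{Block_3} and Proposition~\ref{Block_2}, and decide $1\in\mathrm{Cl}_{nil}(L_\alpha L_{\beta^{-1}})$ by Corollary~\ref{nil-dec-rath}. The extra details you supply (decidability of being a block group, the explicit automaton for $L_m$) are correct bookkeeping that the paper leaves implicit.
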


\begin{proof}
It is well known~\cite{Hen-Mar-Pin-Rho} that $\mathsf{J}\malcev \mathsf{G}= \mathsf{BG}$ and $\mathsf{J}\ast\mathsf{G}\subseteq \mathsf{J}\malcev \mathsf{G}$. Thus $\mathsf{J}\ast\mathsf{G}_{nil}\subseteq \mathsf{BG}$ and so it suffices to decide membership in $\mathsf J\ast \mathsf{G_{nil}}$ for block groups.
Suppose that $M$ is a block group generated by a finite set $A$. By Theorem~\ref{Block_1}, $M\in \mathsf{J}\ast\mathsf{G_{nil}}$ if and only if, for every pair $\{\alpha,\beta\}$ of regular elements of $M$ which form a $\mathsf{G_{nil}}$-pointlike set, the equality $\alpha\alpha^{-1}\beta\beta^{-1} = \alpha\beta^{-1}$ holds. Thus it suffices  to decide whether a pair $\{\alpha,\beta\}$ of regular elements of $M$ is a $\mathsf{G_{nil}}$-pointlike set. In fact, by Proposition~\ref{Block_2} and Lemma~\ref{Block_3}, it suffices to decide whether $1\in \mathrm{Cl}_{nil}(L_{\alpha} L_{\beta^{-1}})$.  But since $L_{\alpha}L_{\beta^{-1}}$ is an effectively computable rational subset   of $A^*$, we can decide this by Corollary~\ref{nil-dec-rath}.  This completes the proof.
%
%
%
% We have $\mathrm{Cl}_{nil}(L_{\alpha} L_{\beta^{-1}})=\mathrm{Cl}_{nil}(\mathrm{Cl}_{\mathsf{G}}(L_{\alpha} L_{\beta^{-1}}))$.\\
%By the observation preceding the statement of the theorem, there exist effectively computable
%finitely generated subgroups $H$ and $K$ and words $w,u \in F(A)$ such that $\mathrm{Cl}_{\mathsf{G}}(L_{\alpha})=Hw$ and %$\mathrm{Cl}_{\mathsf{G}}(L_{\beta^{-1}})=Ku$. It follows that $$\mathrm{Cl}_{\mathsf{G}}(L_{\alpha}) \mathrm{Cl}_{\mathsf{G}}(L_{\beta^{-1}})=Hw Ku=wuu^{-1}w^{-1}Hwuu^{-1}Ku=wuH'K',$$
%whence, $\mathrm{Cl}_{nil}(L_{\alpha} L_{\beta^{-1}})=wu~\mathrm{Cl}_{nil}(H'K')$.
%Now, as membership in the nil-closure of the product of two subgroups is decidable, the result follows.
\end{proof}

An interesting open question is whether $\mathsf {PG_{nil}}$ has a decidable membership problem, where we recall that if $\mathsf H$ is a pseudovariety of groups, then $\mathsf{PH}$ is the pseudovariety generated by all power sets of groups in $\mathsf H$ (with setwise product).  It is known that $\mathsf{PH}\subseteq \mathsf J\ast \mathsf H$ and that equality holds precisely for the so-called Hall pseuodovarieties~\cite{PH}.  Hall pseudovarieties are join irreducible~\cite{PH} and so $\mathsf{PG_{nil}}\neq \mathsf J\ast \mathsf{G_{nil}}$.

\subsubsection*{Acknowledgments.}
The work of the first and second authors was supported, in part, by CMUP
(UID/MAT/00144/2013), which is funded by FCT (Portugal)
with national (MCTES) and European structural funds (FEDER), under the
partnership agreement PT2020.
The work of the second author was also partly supported by the FCT
post-doctoral scholarship SFRH/ BPD/89812/2012.
The third author was supported in part by a grant from the Simons Foundation (\#245268 to Benjamin Steinberg), the Binational Science Foundation of Israel and the US (\#2012080), a CUNY Collaborative Research Incentive Grant \#2123, by a PSC-CUNY grant and by NSA MSP grant \#H98230-16-1-0047.

%%%%%%%%%%%%%%%%%%%%%%%%%%%%%%%%%%%%%%%%%%%%%%%%%%%%%%%%%%%%%%%%%%%%%%%%%%%%%%%%%%%%%%%%%%%%%%%%%
%%%%%%%%%%%%%%%%%%%%%%%%%%%%%%%%%%%%%%%%%%%%%%%%%%%%%%%%%%%%%%%%%%%%%%%%%%%%%%%%%%%%%%%%%%%%%%%%%
%\bibliographystyle{plain}
%\bibliography{ref-pro-nil}
\def\cprime{$'$}

\end{document}